\newtheorem{thm}{Theorem}[section]
\newtheorem{prop}[thm]{Proposition}
\newtheorem{lem}[thm]{Lemma}
\newtheorem{conj}[thm]{Conjecture}
\newtheorem{defn}[thm]{Definition}
\newtheorem{qu}[thm]{Question}
\theoremstyle{remark}
\newtheorem{rem}[thm]{Remark}
\newtheorem{ex}[thm]{Example}
\title{Contact perturbations of Reebless foliations are universally tight}
\author{Jonathan Bowden}
\address{Mathematisches Institut, Universit\"at Augsburg, Universit\"atsstr. 14, 86159 Augsburg, Germany}
\email{jonathan.bowden@math.uni-augsburg.de}
\date{\today}
\begin{document}
\begin{abstract}
We give a complete proof of the fact that a contact structure that is sufficiently close to a Reebless foliation is universally tight.
\end{abstract}

\maketitle
\section{Introduction}
In both the general theory of contact structures and that of foliations on $3$-manifolds one has a certain amount of flexibility due to the presence of overtwisted discs on the one hand and Reeb components on the other. This flexibility is borne out by the fact that any co-oriented plane field  is homotopic both to a contact structure and a foliation --  both contact structures and foliations satisfy an $h$-principle. On the other hand the construction of tight contact structures and foliations without Reeb components is a fundamental theme in both areas and the general existence questions has stimulated much research over several decades. 

The relationship between contact structures and foliations on $3$-manifolds was established by Eliashberg and Thurston \cite{ETh} who showed that any foliation apart from the product foliation on $S^2 \times S^1$ can be $C^0$-approximated by contact structures. They also observed that tautness of the foliation implies tightness of any sufficiently close contact structure. On the other hand the correct analogue of tightness for foliations ought to be the absense of Reeb components and in their book on confoliations Eliashberg and Thurston state that a contact structure $\xi$ that is sufficiently $C^0$-close to a Reebless foliation $\mathcal{F}$ is universally tight. However, their proof was incomplete and Colin \cite{Col} subsequently proved that a Reebless foliation can indeed be $C^0$-approximated by universally tight contact structures, but not that \emph{any} contact structure sufficiently closed to a Reebless foliation is necessarily universally tight. Recently Vogel \cite{Vog2} was able to give a proof under the additional assumption that all torus leaves have attractive holonomy. The aim of the present article is to give a complete proof of the original statement of the theorem as formulated in \cite{ETh}:
\begin{thm}\label{Reebless}
Let $\mathcal{F}$ be a Reebless foliation on a closed $3$-manifold $M$. Then there is a $C^0$-neighbourhood $\mathcal{U}_0$ of $T\mathcal{F}$ such that any contact structure $\xi$ in $\mathcal{U}_0$ is universally tight.
\end{thm}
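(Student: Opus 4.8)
The plan is to reduce \emph{universal} tightness to plain tightness, proved with a threshold of a purely local character, and to establish the latter by cutting $\mathcal F$ along its torus leaves: on the essentially taut pieces one invokes the Eliashberg--Thurston fillability argument, near the torus leaves one analyses the convex structure of $\xi$ by hand, and one glues. Fix a metric $g$ on $M$ and take $\mathcal U_0$ to be the $\varepsilon_0$-ball about $T\mathcal F$ in the resulting $C^0$-metric on plane fields, where $\varepsilon_0=\varepsilon_0(\mathcal F,g)>0$ is extracted from estimates involving only the local geometry of $\mathcal F$. This is the mechanism for getting the \emph{universal} statement: for a finite cover $p\colon\widehat M\to M$ the foliation $p^*\mathcal F$ is again Reebless, $p^*g$ is a metric, and $p^*\xi$ is still $\varepsilon_0$-close to $Tp^*\mathcal F$, so running the same argument on $\widehat M$ proves $p^*\xi$ tight; and since $\pi_1(M)$ is residually finite by geometrisation, a (compact) overtwisted disc in the universal cover would already embed in a finite cover, so tightness in all finite covers forces tightness in the universal cover. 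Finally, if $\mathcal F$ has a sphere leaf then by \cite{ETh} no contact structure is $C^0$-close to it and $\mathcal U_0=\varnothing$ works; and if $\pi_1(M)$ is finite then $M$ admits no Reebless foliation at all by Novikov's theorem. So we may assume $M$ is aspherical with infinite fundamental group.

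By Novikov's theorem every torus leaf of $\mathcal F$ is incompressible, and, being pairwise disjoint, such leaves fall into finitely many parallel families; pick a finite collection $\mathcal T$ of disjoint torus leaves meeting every family and disjoint product neighbourhoods $N=\bigsqcup_i T^2\times[-1,1]$ on which the leaves of $\mathcal F$ are $C^0$-close to the tori $T^2\times\{t\}$. Cutting $M$ along $\mathcal T$ yields compact pieces with torus boundary, on each of which the restricted Reebless foliation is either taut or a dead-end piece with its non-tautness localised near the boundary tori. On the taut pieces we follow \cite{ETh}: such a foliation carries a closed $2$-form positive on its tangent plane field, hence positive on $\xi$ as soon as $\xi$ is $C^0$-close enough to $T\mathcal F$; extending this form over the piece times $[0,1]$ and perturbing yields a symplectic form weakly dominating $\xi$ near the boundary, so by Eliashberg's theorem $\xi$ is tight there. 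The threshold required is pointwise in the $2$-form and in $g$, hence stable under covers.

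The crux --- where \cite{ETh} was incomplete and \cite{Vog2} assumed the holonomy of the torus leaves to be attracting --- is the behaviour of $\xi$ on $N$. After a $C^0$-small isotopy one makes $T^2\times\{\pm1\}$ convex; all leaves meeting $N$ being homologically parallel to the torus leaf $T^2\times\{0\}$, their dividing sets have a definite slope determined by the holonomy, and one must show that $\xi$ restricted to each component of $N$ is tight --- in fact the standard minimally twisting $T^2\times I$-model with matching boundary slopes --- and that after a further small isotopy the cutting torus carries exactly two dividing curves of that slope. When the holonomy of $T^2\times\{0\}$ is attracting this is essentially \cite{Vog2}; in general the holonomy is an arbitrary finitely generated group of germs of diffeomorphisms of $[0,\varepsilon)$ fixing the origin, so on one side of $T^2\times\{0\}$ the leaves may form a complicated, countable, nested family of further torus leaves separated by non-compact leaves --- with ``turn-arounds'' present in the dead-end case. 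The plan is to subdivide $N$ along this family into sub-annuli on each of which $\mathcal F$ is taut or attracting and $\xi$ is, by the previous step or by the argument of \cite{Vog2}, the expected model; since the relevant slopes are constant along all of $N$, these local models patch, and an overtwisted disc --- being compact --- would lie in a finite union of them, contradicting tightness of each. I expect this to be the main obstacle: controlling the perturbation, and with it the convex structure, of an \emph{arbitrary} contact structure in $\mathcal U_0$ near a torus leaf with wild holonomy, and in particular achieving two dividing curves on the cutting tori, with a threshold $\varepsilon_0$ that does not depend on the leaf.

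Finally, reassemble $M$ from the cut pieces and the components of $N$ by gluing along the incompressible tori $\mathcal T$: along each, both sides now induce convex structures with two dividing curves of matching, essential, non-meridional slope, so Colin's gluing theorem for tight contact structures along incompressible tori \cite{Col} shows $\xi$ tight on $M$. (This requires arranging the Eliashberg--Thurston perturbation on the taut pieces to match the torus-leaf models along $\mathcal T$ --- an interpolation that the incomplete argument of \cite{ETh} passed over too quickly.) Since every estimate involved only local data of $\mathcal F$, the same proof applied to $p^*\mathcal F$ on any finite cover shows $p^*\xi$ tight, and with the reduction of the first paragraph we conclude that every $\xi\in\mathcal U_0$ is universally tight.
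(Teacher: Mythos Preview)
Your outline has the right architecture---decompose along torus leaves, handle the taut pieces by symplectic filling, handle the torus pieces separately, then glue---but the substance of the torus step is missing, and what you sketch there does not work.

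The difficulty you identify as ``the main obstacle'' is exactly where the paper's argument lives, and it is resolved by a mechanism completely different from yours. You propose to subdivide a neighbourhood $N$ of a torus leaf along a possibly countable nested family of further torus leaves, apply Vogel's attracting-holonomy argument on each sub-piece, and then invoke compactness of a hypothetical overtwisted disc. This does not yield tightness: knowing that each piece is tight does not make a finite (let alone infinite) union tight without a gluing theorem at every stage, and you have not supplied one for the internal interfaces. You also assume ``the relevant slopes are constant along all of $N$,'' but this is precisely false for \emph{stable} stacks of torus leaves, where the asymptotic slopes at the two ends differ. The paper's approach abandons the holonomy of $\mathcal F$ on $N$ entirely: the only feature of $\xi$ it uses there is transversality to the interval fibres of $\widehat N_i\cong T^2\times[-\epsilon,c_i+\epsilon]$. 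From this alone, via a Giroux-style movie/sheet analysis (Proposition~\ref{Giroux}, building on Lemma~\ref{sheets_attractive}), one isotopes $\xi$ rel boundary so that \emph{every} torus slice has a suspension characteristic foliation. Then either the slopes are constant---in which case one fills $\widehat N_i$ back in with a product foliation, enlarging the taut piece and handling it by the filling argument---or they are not, in which case one finds \emph{linearly foliated} (pre-Lagrangian) tori inside $\widehat N_i$.

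Two further points. First, the gluing you invoke is along convex tori with two dividing curves; the paper instead uses Colin's theorem for gluing along incompressible pre-Lagrangian tori (Theorem~\ref{Colin}), which is what the normal form above actually produces and which directly yields \emph{universal} tightness. Second, your reduction of universal tightness to tightness on finite covers via residual finiteness is correct but unnecessary: both the filling argument (carried out on the universal cover with bounded geometry, Theorem~\ref{taut _fill_bound}) and Colin's pre-Lagrangian gluing already deliver universal tightness, so there is no need to control thresholds uniformly over a tower of covers.
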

The main steps in the proof are as follows: The fact that $\mathcal{F}$ is a Reebless foliation gives a decomposition of $M$ into two (possibly disconnected pieces) $M_{taut}, N_{Tor}$. This decomposition is such that $\mathcal{F}$ is taut on $M_{taut}$ and each component of $N_{Tor}$ is a thickened incompressible torus $T^2 \times [0,1]$ on which the foliation is transverse to the interval fibers. Any sufficiently small perturbation of $\mathcal{F}$ is then universally tight on each of these pieces. The key observation (Proposition \ref{Giroux}) is then that the contact structure on each torus piece is isotopic to one that is everywhere transverse to the $T^2$-slices. One then distinguishes two cases depending on whether the slopes of the characteristic foliations on the $T^2$-slices are constant or not. In the first case one can fill in the product piece with a foliation so that the contact structure looks like a perturbation of a taut foliation and is consequently universally tight. In the second case one can apply results of Colin \cite{Col0} for glueing universally tight contact structures along linearly foliated tori to conclude the proof.


Since the space of confoliations on a given closed $3$-manifold is the $C^0$-closure of the space of contact structures except in the case of the product foliation on $S^2 \times S^1$, Theorem \ref{Reebless} suggests a notion of tightness for confoliations, which might be called perturbation tightness or \emph{p-tightness}. Here a confoliation $\xi$ is $p$-tight if all positive contact structures are tight in some $C^0$-neighbourhood of $\xi$. Eliashberg and Thurston already proposed a definition of tightness for confoliations in \cite{ETh} which is in particular a generalisation of both tightness for contact structures and Reeblessness for foliations. However, Vogel \cite{Vog} has since shown that this notion of tightness for confoliations was too general, since the Thurston-Bennequin inequalities can be violated, and this led him to introduce the more restrictive notion of \emph{s-tightness}. Perhaps the correct definition of tightness for confoliations should capture the fact that if a given confoliation is not tight then it is a $C^0$-limit of overtwisted contact structures. In Section \ref{notions_of_tightness} we compare the various notions of tightness for confoliations and discuss the inclusions between them.


\subsection*{Acknowledgments:} We thank T. Vogel for helpful comments. The hospitality of the Max Planck Institute f\"ur Mathematik in Bonn, where part of this research was carried out, is also gratefully acknowledged. 

\subsection*{Conventions:} All manifolds, contact structures and foliations are smooth and oriented. Unless otherwise stated all manifolds will be closed and connected.

\section{Foliations, contact structures and confoliations}
A codimension-1 foliation $\mathcal{F}$ on a 3-manifold $M$ is a decomposition of $M$ into immersed surfaces called \emph{leaves} that is locally diffeomorphic to the decomposition of $\mathbb{R}^3$ given by the level sets of the projection to the $z$-axis. We will always assume that all foliations are smooth and cooriented. One can then define a global non-vanishing $1$-form $\alpha$ by requiring that 
$$\text{Ker}(\alpha) = T \mathcal{F} = \xi \subset T M.$$
By Frobenius' Theorem such a cooriented distribution is tangent to a foliation if and only if 
$$\alpha \wedge d \alpha \equiv 0$$
and in this case $\xi$ is called \emph{integrable}. By contrast a totally non-integrable plane field or \emph{contact structure} $\xi$ is a distribution such that $\alpha \wedge d \alpha$ is nowhere zero for any defining $1$-form with $\xi = \text{Ker}(\alpha)$. Unless specified otherwise all contact structures will be \emph{positive} with respect to the orientation on $M$ so that $\alpha \wedge d \alpha >0$. If $\alpha$ only satisfies the weaker inequality $\alpha \wedge d \alpha \geq 0$, then $\xi$ is called a (positive) \emph{confoliation}.

There is a fundamental dichotomy amongst contact structures between those that are tight and those that are overtwisted. Recall that a contact structure $\xi$ on manifold $M$ is called \emph{overtwisted} if there is an embedded disc $D \hookrightarrow M$ such that
$$TD|_{\partial D} = \xi |_{\partial D}.$$
If a contact structure $\xi$ admits no such disc then it is called \emph{tight}. A contact structure is \emph{universally tight} if its pullback to the universal cover $\widetilde{M} \to M$ is tight.

Recall, furthermore, that a foliation is \emph{taut} if each leaf admits a closed transversal, i.e. for each leaf $L$ of $\mathcal{F}$ there is a simple closed curve $\gamma$ intersecting $L$ that is everywhere transverse to $\mathcal{F}$. There are several equivalent formulations of tautness given by the following (cf.\ \cite{Sul}):
\begin{lem}\label{taut_def}
Let $\mathcal{F}$ be a foliation on $M$. Then the following are equivalent:
\begin{enumerate}
\item $\mathcal{F}$ is taut.
\item $M$ admits a dominating closed $2$-form $\omega$ with $\omega|_{\mathcal{F}} > 0$.
\item $M$ admits a metric so that all leaves of $\mathcal{F}$ are minimal surfaces.
\end{enumerate}
\end{lem}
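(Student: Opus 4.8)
The plan is to separate the elementary equivalence $(2)\Leftrightarrow(3)$ from the substantive one $(1)\Leftrightarrow(2)$, the latter being Sullivan's homological characterisation of tautness. For $(2)\Leftrightarrow(3)$ I would use Rummler's criterion, which is especially transparent in dimension three. Given a metric $g$, let $\chi$ be the characteristic $2$-form, i.e.\ the form with $\ker\chi=(T\mathcal F)^{\perp}$ restricting to the induced area form on each leaf; computing $d\chi$ in an adapted orthonormal coframe $(e^{1},e^{2},e^{3})$ with $e^{3}$ the unit conormal gives $d\chi=-H\,e^{1}\wedge e^{2}\wedge e^{3}$, where $H$ denotes the mean curvature of the leaves. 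Hence the leaves are minimal precisely when $\chi$ is closed, and then $\omega:=\chi$ has $\omega|_{\mathcal F}>0$, giving $(3)\Rightarrow(2)$. Conversely a closed $\omega$ with $\omega|_{\mathcal F}>0$ is nowhere zero, so $\ker\omega$ is a line field transverse to $\mathcal F$; being a complement to $T\mathcal F$ it is isomorphic to the (trivial, since $\mathcal F$ is coorientable) normal bundle, hence orientable, and a metric with $\ker\omega\perp T\mathcal F$ and leafwise area form $\omega|_{T\mathcal F}$ has characteristic form equal to $\omega$, which is closed, so the leaves are minimal: $(2)\Rightarrow(3)$.

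For $(1)\Leftrightarrow(2)$ I would invoke the duality between $2$-forms and foliation currents, namely that $\mathcal F$ carries a closed $2$-form positive on it $\iff$ no nonzero foliation cycle is null-homologous $\iff$ $\mathcal F$ is taut. The first equivalence is Hahn--Banach applied to the open convex cone $P\subset\Omega^{2}(M)$ of forms positive on $\mathcal F$ and the subspace $Z$ of closed forms: if $Z\cap P=\emptyset$, a separating $2$-current $C$ vanishes on $Z$ (so $\partial C=0$ and $[C]=0$ in $H_{2}(M;\mathbb R)$) and is nonnegative on $P$, and nonnegativity on $P$ forces $C$ to be a positive average of planes tangent to $\mathcal F$, i.e.\ a null-homologous foliation cycle; the converse is the observation that a closed form positive on $\mathcal F$ pairs positively with every foliation cycle but trivially with a null-homologous one. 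For the second equivalence: if $\mathcal F$ is taut, a leaf in the support of a foliation cycle $C$ meets a positively transverse closed curve, which has positive intersection number with $C$, so $[C]\ne 0$; conversely, if a leaf $L$ has no closed transversal one produces a foliation cycle supported in $\overline L$ by averaging the tangent planes of $\mathcal F$ over ever longer plaque-paths in $L$, the absence of a closed transversal being precisely what makes the limiting cycle bound.

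The main obstacle is this last step — extracting from a leaf without a closed transversal a foliation cycle that is a boundary. This is the technical heart of Sullivan's theorem: one must show that the forward transverse push-off of $L$ is confined to a ``dead-end''-type region, produce the bounding $3$-current from that confinement, and carry out the Krylov--Bogolyubov/Schwartzman averaging so that the limiting current is at once $\mathcal F$-tangent, closed and null-homologous. By contrast the other ingredients — the Hahn--Banach separation, the description of the dual cone of $\mathcal F$-planes, Rummler's local computation, and the orientability of $\ker\omega$ — are routine.
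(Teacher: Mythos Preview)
Your plan is a faithful outline of Sullivan's original argument and is essentially correct; the paper, however, does not prove the lemma at all --- it simply cites \cite{Sul} and then sketches only the implication $(1)\Rightarrow(2)$, because that is the one it needs later. The sketch it gives is \emph{not} the Hahn--Banach/foliation-cycle route you take: instead, for each point $x\in M$ one picks a closed transversal $\gamma_x$ through $x$, pulls back a bump $2$-form on the disc to a tubular neighbourhood $N(\gamma_x)\cong\gamma_x\times D$ to get a closed form that is nonnegative on $T\mathcal F$ and strictly positive near $\gamma_x$, and then sums finitely many of these by compactness. This elementary construction bypasses the duality with currents entirely and has the advantage of being explicit --- in particular, near a boundary torus one can arrange $\omega=\alpha\wedge dt$ for a closed $1$-form $\alpha$, which is exactly what is used in Theorem~\ref{taut _fill_bound} and in the proof of Theorem~\ref{Reebless}. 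Your approach buys the full equivalence (including the harder direction $(2)\Rightarrow(1)$ and the link with minimal surfaces), but at the cost of the Schwartzman/Sullivan averaging step you rightly flag as the main obstacle; the paper's route buys only $(1)\Rightarrow(2)$ but is constructive and immediately usable downstream.
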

The construction of the closed $2$-form in the implication $(1) \Longrightarrow (2)$ will be important. For this one takes a collection of transversals $\gamma_x$ through every point $x \in M$. Then a bump form on the $2$-disc $D$ gives a closed form on a small tubular neighbourhood $N(\gamma_x) \cong \gamma_x \times D$ that is non-negative on $\mathcal{F}$ and strictly positive on some open neighbourhood of $\gamma_x$. By compactness the sum of finitely many such forms will be positive on $\mathcal{F}$.

A slightly more general notion than tautness is that of a \emph{Reebless} foliation. Here a Reebless foliation is a foliation containing no Reeb components, where a Reeb component is a solid torus whose boundary is a leaf and whose interior is foliated by planes. Note that any taut foliation is Reebless since the boundary of a Reeb component is a null-homologous closed leaf, which thus admits no closed transversal. The existence of a Reebless foliations on a manifold has certain geometric consequences due to the following result of Novikov.
\begin{thm}[Novikov]
Let $\mathcal{F}$ be a Reebless foliation on a $3$-manifold. Then all leaves of $\mathcal{F}$ are incompressible and all transverse loops are essential in $\pi_1(M)$. Moreover, $\pi_2(M) = 0$ unless $\mathcal{F}$ is the product foliation on $S^2 \times S^1$.
\end{thm}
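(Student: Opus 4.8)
The plan is to argue by contradiction: assuming $\mathcal{F}$ is Reebless, I suppose that one of the three conclusions fails and then manufacture a Reeb component (or, in the last case, pin down $M$). The three failures all feed into one machine. If some leaf $L$ is compressible, then by the loop theorem there is an embedded loop $\gamma\subset L$, essential in $L$, bounding an embedded disc in $M$; if some loop transverse to $\mathcal{F}$ is null-homotopic it bounds a singular disc $f\colon D^2\to M$ with $f|_{\partial D^2}$ a transversal; and if $\pi_2(M)\neq 0$ there is, by the sphere theorem, an embedded essential sphere $S\subset M$, and an innermost-circle argument either produces a compressible leaf, reducing to the first case, or leaves us a sphere in general position to analyse directly. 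So in every case I must extract a Reeb component from an essential disc or sphere, the only exception being forced to be the product foliation on $S^2\times S^1$.

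The engine is the theory of \emph{vanishing cycles}. I would put the disc $f$ (or the sphere $S$) in general position with respect to $\mathcal{F}$, so that the pulled-back tangential distribution integrates to a singular one-dimensional foliation $\mathcal{G}$ on the surface whose singularities are all of centre or saddle type, its behaviour along $\partial D^2$ depending on the case ($\partial D^2$ being a closed leaf of $\mathcal{G}$ when it lies in a leaf of $\mathcal{F}$, and the leaves of $\mathcal{G}$ meeting $\partial D^2$ transversally when $\partial D^2$ is a transversal). A Poincar\'e--Hopf count gives $\#\{\text{centres}\}-\#\{\text{saddles}\}=\chi$ of the surface, so on $D^2$ there is at least one centre and on $S^2$ at least two. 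Starting from a centre, the leaves of $\mathcal{G}$ around it are small circles bounding discs that $f$ carries into single leaves of $\mathcal{F}$; enlarging such a circle until the family of bounding discs first degenerates (hitting a saddle configuration or escaping to $\partial D^2$) and running a Poincar\'e--Bendixson analysis of the region swept out, I would extract a continuous family $f_t\colon S^1\to M$, $t\in[0,1]$, lying in a continuously varying family of leaves $L_t$, with $f_t$ bounding an embedded disc $D_t\subset L_t$ for $t>0$ but $f_0=\gamma_0$ \emph{not} null-homotopic in $L_0$; the trace $\bigcup_{t>0}f_t(S^1)$ can moreover be arranged transverse to $\mathcal{F}$. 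This is the vanishing cycle.

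From a vanishing cycle I would build a Reeb component. Take a short transversal through a point of $\gamma_0$ and follow the holonomy of $\mathcal{F}$ along the loops $\gamma_t$ as $t\downarrow 0$: for the discs $D_t$ to shrink monotonically onto $\gamma_0$, the one-sided holonomy germ along $\gamma_0$ must be trivial or attracting in the correct direction, and then the nested discs together with the cylinder swept out by the $\gamma_t$ bound a region whose closure is an embedded solid torus $V$ with $\partial V$ contained in a single leaf and with $\mathcal{F}|_{\mathrm{int}\,V}$ having every leaf a plane. Passing to the first leaf accumulated by the $D_t$ makes $V$ a genuine Reeb component, contradicting Reeblessness. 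I expect this closing-up step --- controlling the one-sided holonomy, and verifying that the limiting object is an \emph{embedded} solid torus foliated by planes rather than something more degenerate --- to be the main obstacle; here I would follow the careful treatments in Candel--Conlon or Calegari rather than redo every estimate.

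Finally, the exceptional case: if $\pi_2(M)\neq0$ and no compression of $S$ yields a compressible leaf, the vanishing-cycle machinery run on $S^2$ (now with at least two centres) must, failing to produce a Reeb component, force every leaf met by $S$ to be a sphere. A standard open--closed argument then propagates this: the union of spherical leaves is a nonempty saturated subset of $M$ that is both open and closed, so $M$ fibres over $S^1$ with spherical fibres and $\mathcal{F}$ is the product foliation on $S^2\times S^1$. Assembling the three cases completes the proof of Novikov's theorem.
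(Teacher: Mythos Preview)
The paper does not prove this statement; it is stated as Novikov's classical theorem and attributed to \cite{Nov}, with no argument given. So there is no ``paper's own proof'' to compare against.

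Your outline is essentially the standard proof of Novikov's theorem via vanishing cycles, as in Novikov's original paper or the textbook treatments you cite (Candel--Conlon, Calegari). The reduction of all three conclusions to the existence of a vanishing cycle, the general-position/Poincar\'e--Bendixson analysis of the induced singular foliation on a disc or sphere, and the construction of a Reeb component from a vanishing cycle are all correct in outline. The one place where your sketch is a bit loose is the passage from ``nested discs $D_t$ shrinking onto $\gamma_0$'' to ``embedded solid torus with torus boundary leaf foliated by planes'': what you actually obtain first is a half-Reeb (a one-sided neighbourhood of the limiting torus leaf foliated by planar discs), and one then argues that the torus leaf must bound a genuine Reeb component on one side --- this is the step where the careful references you mention are indeed needed. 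Similarly, in the $\pi_2$ case, the cleanest route is Reeb stability once you have a single spherical leaf, rather than an open--closed argument phrased in terms of ``leaves met by $S$''. These are refinements rather than gaps; your plan is sound and matches the classical argument the paper is invoking.
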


The relationship between contact structures and foliations is given by the following fundamental theorem of Eliashberg and Thurston.
\begin{thm}[Eliashberg-Thurston \cite{ETh}]\label{Eli_Th}
Let $\xi$ be an integrable plane field of class $C^2$ that is not tangent to the foliation by spheres on $S^2 \times S^1$. Then $\xi$ can be $C^0$-approximated by positive and negative contact structures.
\end{thm}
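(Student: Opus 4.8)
The plan is to reproduce the argument of Eliashberg and Thurston \cite{ETh}. It suffices to produce \emph{positive} contact structures arbitrarily $C^{0}$-close to $\xi$; a negative approximation then follows by running the same argument on $M$ with the reversed ambient orientation, which interchanges ``positive'' and ``negative'' while leaving $(S^{2}\times S^{1},\mathrm{spheres})$ the only forbidden pair. Write $\xi=\ker\alpha$, so $\alpha\wedge d\alpha\equiv 0$. The engine is the \emph{linear deformation} $\alpha_{t}=\alpha+t\beta$, for which
\[
\alpha_{t}\wedge d\alpha_{t}\;=\;t\,\omega_{\beta}\;+\;t^{2}\,\beta\wedge d\beta,\qquad \omega_{\beta}:=\alpha\wedge d\beta+\beta\wedge d\alpha .
\]
If $\omega_{\beta}\geq 0$ everywhere and $\beta\wedge d\beta>0$ wherever $\omega_{\beta}=0$, then by compactness $\alpha_{t}$ is a positive contact form for all small $t>0$ and $t\beta\to 0$ in $C^{0}$; with only $\omega_{\beta}\geq 0$ one gets a positive confoliation whose contact locus $H(\xi)=\{\alpha\wedge d\alpha>0\}$ contains $\{\omega_{\beta}>0\}$. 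So the task is to manufacture, by $C^{0}$-small moves, a confoliation with $H(\xi)=M$.

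When $\mathcal F$ has non-trivial holonomy I would proceed in two steps. \emph{Creating a contact region:} near a leaf $L$ (or closed transversal) carrying non-trivial holonomy one normalises $\alpha$ on a small neighbourhood $N$ and picks $\beta$ supported in $N$ that turns the plane field through the shear the holonomy imposes on the nearby leaves; the resulting confoliation is unchanged outside $N$ and contact on part of $N$, so $H(\xi)\neq\emptyset$. \emph{Propagating contactness:} if $\xi$ is a confoliation contact on an open set $U$, a further $C^{0}$-small linear deformation supported near any arc transverse to $\xi$ with an endpoint in $U$ enlarges $U$ along that arc, with smallness governed by the estimate above; since $M$ is connected every point of $M$ can be joined to $H(\xi)$ by such a transverse arc, and finitely many enlargements (by compactness) give $H(\xi)=M$.

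It remains to treat foliations \emph{without} holonomy. Using Sacksteder's theorem (here the $C^{2}$-hypothesis enters: an exceptional minimal set must contain a leaf with non-trivial linear holonomy), Reeb stability, and the structure theory of holonomy-free foliations, these reduce, up to $C^{0}$-approximation, to fibrations over $S^{1}$. For such $\mathcal F$ the recipe above fails outright: if $\alpha$ is closed then $\omega_{\beta}=\alpha\wedge d\beta=-d(\alpha\wedge\beta)$ is exact, hence cannot be $\geq 0$ without vanishing identically, so there is no slack in the first-order term. One gets around this by passing to a carefully chosen non-closed defining form for the same $\mathcal F$, or by writing a global contact structure on the mapping torus by hand, the model case being the family $\ker(dt+s\lambda_{t})$ on $T^{3}$, which $C^{0}$-converges to the linear foliation as $s\to 0$. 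It is precisely here that $(S^{2}\times S^{1},\mathrm{spheres})$ must be excluded: it is the one fibration for which every such construction breaks down --- there is no holonomy anywhere, and there is no room to turn the plane field inside the two product pieces $D^{2}\times S^{1}$ out of which $S^{2}\times S^{1}$ is glued.

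The main obstacle is this last, holonomy-free, case: with neither holonomy to exploit nor --- because of the exactness of $\omega_{\beta}$ --- any slack in the linear deformation, the construction has to be genuinely non-linear (or performed globally), and the exclusion of the sphere foliation is essential rather than cosmetic. By contrast the rest is comparatively soft: propagation uses only the normal form of a confoliation near a transverse arc together with the quantitative control in the linear-deformation formula, and the reduction to the holonomy-free case is standard foliation theory. One must of course check throughout that every perturbation stays within the prescribed $C^{0}$-neighbourhood of $T\mathcal F$, which is why the neighbourhoods $N$ are taken thin and the parameters $t,s$ small.
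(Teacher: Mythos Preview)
The paper does not contain a proof of this theorem: it is stated and attributed to Eliashberg--Thurston \cite{ETh}, then used as a black box (for instance in the proof of Theorem~\ref{taut _fill_bound} to obtain a negative contact approximation of a taut foliation). There is therefore no argument in the paper to compare your proposal against.

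That said, your outline follows the broad shape of the Eliashberg--Thurston argument but contains a genuine error in the propagation step. You claim contactness is spread along an arc \emph{transverse} to $\xi$ with one endpoint in $H(\xi)$. In \cite{ETh} the propagation runs along arcs \emph{tangent} to $\xi$, i.e.\ contained in leaves of the fully foliated part: the twisting present at the contact end is transported along the leaf by a $C^{0}$-small perturbation, and a confoliation in which every point can be joined to $H(\xi)$ by such a leafwise path is called transitive. A transverse arc gives no such leverage---near it the plane field is just $\ker dz$, and contactness at one end says nothing about the other. This is not cosmetic: the leafwise nature of the propagation is precisely why one needs the analysis of minimal sets (Sacksteder for exceptional minimal sets, holonomy on compact leaves, etc.) to guarantee transitivity after an initial perturbation, and why the holonomy-free case genuinely requires the separate, global construction you allude to at the end.
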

\noindent A version of Theorem \ref{Eli_Th} also holds for confoliations and shows that the space of confoliations is the $C^0$-closure of the space of (positive) contact structures unless $\xi$ is tangent to the product foliation on $S^2 \times S^1$.

One of the most important applications of Theorem \ref{Eli_Th} is that $C^0$-small contact perturbation of a taut foliation is (universally) tight, which in turn gives a large supply of tight contact structures in general. 
\begin{thm}[\cite{ETh}, Corollary 3.2.5]\label{taut _fill}
Let $\mathcal{F}$ be a taut foliation. Then any contact structure $\xi$ that is $C^0$-close to $T\mathcal{F}$ is universally tight. 
\end{thm}
\subsection{Surfaces in contact manifolds}
Given a surface $S$ in a contact manifold one can consider the induced singular foliation or \emph{characteristic foliation} on $S$ which we denote by $\xi(S)$. For a generic surface the singularities of this foliation will be non-degenerate and hence either elliptic or hyperbolic depending on whether the determinant of the linearisation of $\xi(S)$ at a singularity is positive or negative. After a further perturbation one can also assume that the linearisation at an elliptic singularity has real eigenvalues and we will always assume that this is the case. If $S$ is oriented, then each singularity carries a sign that is determined by whether the orientation of the contact structure agrees with that of the surface. 

An important notion for performing cut and paste operations in contact manifolds is that of a convex surface.
\begin{defn}[Convexity]
Let $\xi$ be a contact structure on a $3$-manifold $M$ and let $S \subset M$ be a closed, embedded surface. The surface is called convex if there is a vector field $X$ transverse to $S$ that preserves the contact structure on a small neighbourhood of $S$.
\end{defn}
\noindent Note that the contact structure in a neighbourhood of a convex surface is defined by a $1$-form
$$\lambda = \beta + fdt,$$
where $t$ is a normal coordinate given by the contact vector field $X$ and both $\beta$ and $f$ do not depend $t$. The set of points where $X$ is tangent to $\xi$ is an embedded submanifold transverse to $\xi(S)$ called the \textbf{dividing set} $\Gamma$ of $S$ and is well defined up to isotopy. The dividing set has the property that $\xi(S)$ is defined by a $1$-form $\beta$ such that $d\beta$ vanishes precisely on $\Gamma$ and the existence of such a defining form is then equivalent to $S$ being convex. Most importantly, convexity is a generic property so that after an initial $C^{\infty}$-perturbation we may always assume convexity (cf.\ \cite{Gir}). 

One has a very useful criterion for convexity in the case that the characteristic foliation satisfies the \emph{Poincar\'e-Bendixson property}: This means that the singularities of $\xi(S)$ are isolated and the limit sets of any half infinite orbit is either a singular point, a closed orbit or a \emph{polycycle} consisting of orbits between between singularities.
\begin{lem}[\cite{Gir}, Proposition 2.5]\label{Poin_Ben}
Let $S$ be a closed surface in a contact manifold and suppose that $\xi(S)$ has the Poincar\'e-Bendixson property. Then $S$ is convex if and only if all closed orbits are non-degenerate (i.e.\ the return map $\phi$ has $\phi'(0) \neq 1)$ and there are no oriented connections from negative to positive singularities.
\end{lem}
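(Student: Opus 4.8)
The plan is to reformulate convexity as a condition on the divergence of a vector field directing $\xi(S)$, deduce the two conditions directly from convexity, and conversely construct a dividing set from the Poincar\'e-Bendixson description of $\xi(S)$. Fix an area form $\omega$ on $S$ and a vector field $Y$ directing $\xi(S)$, so that $\beta:=\iota_Y\omega$ defines $\xi(S)$, $d\beta=(\operatorname{div}_\omega Y)\,\omega$ and $df\wedge\beta=(Yf)\,\omega$ for every function $f$. A direct computation shows that, in a normal coordinate $t$, the form $\beta+f\,dt$ is contact near $S$ exactly when $f\operatorname{div}_\omega Y-Yf>0$; hence $S$ is convex if and only if, after replacing $Y$ by $gY$ with $g>0$ and choosing $\omega$ and $f$ suitably, this inequality holds, in which case $\Gamma=f^{-1}(0)$ is the dividing set, $R_\pm=\{\pm f>0\}$, $\operatorname{div}_\omega Y>0$ on $R_+$ and $<0$ on $R_-$, and the flow of $Y$ crosses $\Gamma$ transversally, out of $R_+$. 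Since $Y(p)=0$ at a singularity $p$, the inequality there forces $\operatorname{sign}(f(p))=\operatorname{sign}(\operatorname{tr}(DY_p))$, which equals the sign of $p$; so positive singularities lie in $R_+$ and negative ones in $R_-$.

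\emph{Necessity.} Let $S$ be convex with $(\Gamma,R_\pm)$ as above. In a collar of $\Gamma$ the flow of $Y$ is monotone across the $\Gamma$-slices and exits $R_+$, so every leaf meets $\Gamma$ at most once and never returns to a side it has left. Thus a closed orbit avoids $\Gamma$ and lies in one component of $R_\pm$, on which $d\beta=(\operatorname{div}_\omega Y)\,\omega$ is nowhere zero and hence of constant sign; integrating $\operatorname{div}_\omega Y$ over one period of $Y$ along the orbit yields the logarithm of the holonomy derivative $\phi'(0)$, which is therefore nonzero, i.e.\ $\phi'(0)\neq1$. Likewise an oriented connection from a negative singularity (in $R_-$) to a positive one (in $R_+$) would be a leaf crossing $\Gamma$ from $R_-$ into $R_+$, contradicting the crossing direction. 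This establishes both conditions.

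\emph{Sufficiency.} Assume all closed orbits are non-degenerate and there is no oriented connection from a negative to a positive singularity. By the Poincar\'e-Bendixson property the limit set of each half-leaf is a singularity, a closed orbit or a polycycle; non-degeneracy excludes semi-stable closed orbits, the no-connection hypothesis forces the saddles on any polycycle to share a common sign, and all other leaves are non-recurrent. Colour the positive elliptic points (which are sources) together with the repelling closed orbits and polycycles by $+$, and the remaining limit pieces by $-$; then the backward limit set of a leaf is a $+$-piece or a negative saddle and its forward limit set is a $-$-piece or a positive saddle, so by hypothesis no leaf runs from a $-$-coloured to a $+$-coloured piece. Using this, build an embedded multicurve $\Gamma$ transverse to $\xi(S)$ separating a neighbourhood $R_+$ of the $+$-pieces from a neighbourhood $R_-$ of the $-$-pieces --- short transverse arcs near singularities, transverse arcs across closed orbits (available by non-degeneracy), transverse boundary multicurves of thickened polycycles obtained from the hyperbolic corners, joined along the non-recurrent region. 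Finally, rescale $Y$ to $gY$ with $g>0$ supported off $\Gamma$ and adjust $\omega$ so that $\operatorname{div}_\omega Y>0$ on $R_+$ and $<0$ on $R_-$: this is possible since $\operatorname{div}(gY)=g\operatorname{div}(Y)+Yg$ affords enough freedom away from the closed orbits, while along a closed orbit the sign of the mean divergence is the fixed sign of $\log\phi'(0)$, which the colouring has matched. Choosing $f$ with $f^{-1}(0)=\Gamma$, $\pm f>0$ on $R_\pm$ and $f$ locally constant off a collar of $\Gamma$ then yields $f\operatorname{div}_\omega Y-Yf>0$, so $S$ is convex with dividing set $\Gamma$.

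\emph{Main obstacle.} The real difficulty is the sufficiency direction: one must route $\Gamma$ so as to be \emph{simultaneously} everywhere transverse to $\xi(S)$ and to separate the sign-coloured pieces, while securing a rescaling of $Y$ whose divergence has the correct sign on each side of $\Gamma$, in particular on collars of the polycycles, where $\operatorname{div} Y$ itself need not have a definite sign. Checking that the colouring closes up consistently around each polycycle, and that the local transverse arcs extend to a global embedded multicurve without losing transversality, is where the Poincar\'e-Bendixson structure and the two hypotheses are genuinely used.
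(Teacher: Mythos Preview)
The paper does not prove this lemma at all; it is stated with a bare citation to Giroux (\cite{Gir}, Proposition~2.5) and is used as a black box. So there is no in-paper proof to compare against.

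Your approach is essentially Giroux's own: recast convexity via the sign of $f\,\operatorname{div}_\omega Y - Yf$, read off necessity from the sign structure of $R_\pm$, and for sufficiency colour the limit pieces according to their repelling/attracting nature and build $\Gamma$ as a transverse multicurve separating the colours. The necessity argument is clean and correct. The sufficiency sketch has the right architecture, and you are honest that the delicate part is routing $\Gamma$ past polycycles and controlling divergence there.

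A few points where the sketch would need tightening before it counts as a proof. First, the phrase ``$g>0$ supported off $\Gamma$'' is contradictory; you presumably mean the rescaling is trivial near $\Gamma$. Second, changing $\omega$ to $e^h\omega$ adds $Yh$ to the divergence, which is the \emph{same} freedom as rescaling $Y$, so invoking both does not buy extra room; the real point is that along a closed orbit the average of any such correction vanishes, so the sign of $\log\phi'(0)$ is the genuine obstruction, and away from closed orbits and singularities non-recurrence lets you integrate along leaves to force any sign you like. Third, your colouring paragraph leaves the status of hyperbolic singularities ambiguous: a positive saddle must end up in $R_+$ and a negative one in $R_-$, and the ``no negative-to-positive connection'' hypothesis is exactly what guarantees that the separatrix structure lets you draw $\Gamma$ so that each saddle sits on the correct side with its separatrices crossing $\Gamma$ in the permitted direction. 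Making this explicit---and checking that the boundary of a small one-sided neighbourhood of a polycycle of like-signed saddles is a transverse curve on the correct side---is where Giroux's argument does the real work.
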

\begin{ex}\label{torus_ex}
Since all flows on planar surfaces satisfy the Poincar\'e-Bendixson property, it follows that any singular foliation with isolated singularities on $S = T^2$ with at least one closed orbit automatically has the Poincar\'e-Bendixson property. If in addition all singularities are positive, then the only way that $S$ cannot be convex is that it has a degenerate closed orbit.
\end{ex}




\section{Novikov Components and Reebless foliations}
Let $\mathcal{F}$ be a foliation on a closed $3$-manifold. For a leaf $L$, we define $A(L)$ as the set of points $y$ such that there exists a closed curve transverse to $\mathcal{F}$ intersecting $L$ that contains $y$. The set $A(L)$ is called the \emph{Novikov component} of $L$ and is an open (possibly empty) \emph{saturated} subset of $M$, whose boundary consists of closed leaves. Here saturated means that if $x \in A(L)$ then the entire leaf $L_x$ through $x$ also lies in $A(L)$. The boundary leaves of $A(L)$ are called \emph{barriers}, since they do not admit closed transversals. Let $L_0$ be such a barrier leaf. After possibly swapping the coorientation of $\mathcal{F}$ we may assume that $L_0$ is cooriented by the inward pointing normal of $A(L)$. We let $A_+(L_0)$ be the subset of all those points that are reachable from $L_0$ by an arc positively transverse to $\mathcal{F}$. Note that $A_+(L_0)$ is an open submanifold whose boundary again consists of closed leaves, which are all oriented by the inward pointing normal and $L_0 \subset \partial A_+(L_0)$.
By doubling $A_+(L_0)$, one obtains a closed oriented manifold $M_{Double}$. The normal directions to $\mathcal{F}$ endow $A_+(L_0)$ with a nowhere vanishing vector field that is inward pointing on the boundary. Thus we have the following formula for the Euler characteristic 
$$0 = \chi(M_{Double}) = 2\chi(A_+(L_0)) - \chi({\partial A_+(L_0)}) = - \chi({\partial A_+(L_0)}).$$
Since none of the boundary components of $A_+(L_0)$ can be spheres by the Reeb Stability Theorem, it follows immediately that all boundary components must have Euler characteristic zero. Thus all barrier leaves must be tori, a fact that goes back to Goodman \cite{Go}.

By a result of Haefliger the set of torus leaves is compact. It follows either that there are finitely many disjoint embeddings $N_i \cong T^2 \times [0,c_i] \to M$ so that $\partial N_i$ consists of leaves and $M \setminus \cup \, N_i$ has no torus leaves or $\mathcal{F}$ is itself a foliation by tori. We call each subset $N_i$ a \emph{stack} of torus leaves. Note that we allow $c_i = 0$ in which case $N_i$ consists of a single torus leaf. We may also assume that $\mathcal{F}$ is transverse to the intervals $\{pt\} \times [0,c_i]$ on each $N_i$. If $\mathcal{F}$ is not a foliation by tori we let $N_{Tor} =\cup \thinspace N_i$ and note that $\mathcal{F}$ is taut on $M_{taut} = M \setminus N_{Tor}$. In the case that $\mathcal{F}$ is a foliation by tori we set $M_{taut} = M$, since in this case $\mathcal{F}$ is itself taut.

We thicken the neighbourhoods $N_i$ to obtain $\widehat{N}_i \cong T^2 \times [-\epsilon,c_i+ \epsilon]$ so that the boundary tori are transverse to $\mathcal{F}$.
The way the foliation can look near the boundary of a stack $N_i$ is very restricted. In fact, results of Kopell and Szekeres imply that after a suitable isotopy one can assume that the induced foliation on tori near the ends of a stack of torus leaves is \emph{linear}. More precisely, we have the following (cf.\ \cite{Eyn}, Lemme 5.21):
\begin{lem}\label{szek_lem}
Let $\mathcal{F}$ be a smooth foliation on $T^2 \times [0,\epsilon]$ having only $L = T^2 \times \{0\}$ as a closed leaf and let $(x,y,z)$ denote standard coordinates on $T^2 \times [0,\epsilon]$. Then there is a fiber-preserving $C^1$-isotopy $\phi_t$ mapping $T^2 \times [0,\epsilon]$ into itself that fixes $L$ and is smooth away on $T^2 \times (0,\epsilon]$ such that the image of $\mathcal{F}$ under $\phi_1$ is defined by the kernel of the $1$-form
$$dz - u(z)(a \thinspace dx + b \thinspace dy)$$
for some function $u(z) \geq 0$ that is positive away from $z=0$ and $a,b \in \mathbb{R}$.
\end{lem}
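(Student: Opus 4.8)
The plan is to reduce the statement to the classical rigidity theory for commuting one-dimensional diffeomorphisms due to Kopell and Szekeres, and then to straighten $\mathcal{F}$ by conjugating its holonomy to an explicit linear model. First I would use that $\mathcal{F}$ is transverse to the interval fibres $\{pt\}\times[0,\epsilon]$: near $z=0$ this is automatic, since $L=T^{2}\times\{0\}$ being a leaf forces $T\mathcal{F}$ to agree with the tangent planes of the slices along $L$ and hence to stay transverse to $\partial_{z}$ for small $z$, while farther in it is part of the normalisation of a stack of torus leaves set up above. Thus $\mathcal{F}=\ker(dz-\eta)$ with $\eta=a(x,y,z)\,dx+b(x,y,z)\,dy$, and the leaf $L$ carries a holonomy homomorphism $\rho\colon\pi_{1}(T^{2})\cong\mathbb{Z}^{2}\to\mathrm{Diff}^{\infty}_{+}([0,\delta),0)$ read off the transversal $\{p_{0}\}\times[0,\delta)$. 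Writing $f=\rho(1,0)$ and $g=\rho(0,1)$, these are commuting germs of smooth orientation-preserving diffeomorphisms fixing $0$. If $\rho$ were trivial, Reeb stability would foliate a neighbourhood of $L$ by compact tori, contradicting the hypothesis; the same reasoning gives more, namely that $\langle f,g\rangle$ has no common fixed point on $(0,\delta)$ after shrinking $\delta$, since a common fixed point at height $z_{0}$ would force the leaf through $(p_{0},z_{0})$ to close up over both generators of $\pi_{1}(T^{2})$ and hence to be a compact torus.

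The heart of the argument is the next step, where I would invoke the Kopell--Szekeres theory in the form of \cite{Eyn}, Lemme 5.21. Using the smoothness of $\mathcal{F}$ and the constraint on fixed points from the previous step, Szekeres' theorem yields a vector field $Z=w(z)\,\partial_{z}$ on $[0,\delta)$, of class $C^{1}$ on $[0,\delta)$ and $C^{\infty}$ on $(0,\delta)$, with $w(0)=0$ and $w$ of constant sign on $(0,\delta)$ (positive after normalising the orientation of $z$), whose local flow $Z^{t}$ is defined near $0$; and Kopell's lemma, together with the fact that the centralizer of a $C^{2}$ contraction is precisely the flow of its Szekeres vector field, identifies $\langle f,g\rangle$ with a subgroup of $\{Z^{t}:t\in\mathbb{R}\}$. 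Hence $f=Z^{a}$ and $g=Z^{b}$ for some $a,b\in\mathbb{R}$, not both zero by nontriviality of $\rho$. I expect this to be the real obstacle: one has to verify that the hypothesis that $L$ is the \emph{only} closed leaf supplies exactly the contracting (or at least free) behaviour needed to apply Szekeres' theorem, and one must accept the genuine loss of regularity of $Z$ at $z=0$, which is what ultimately forces the isotopy to be only $C^{1}$ there.

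To finish, I would compare $\mathcal{F}$ with the foliation $\mathcal{F}_{0}=\ker\bigl(dz-w(z)(a\,dx+b\,dy)\bigr)$ on $T^{2}\times[0,\delta)$; it is integrable because $d\bigl(dz-w(z)(a\,dx+b\,dy)\bigr)=-w'(z)\,dz\wedge(a\,dx+b\,dy)$, and tracing a leaf around the $x$- and $y$-loops of $L$ shows that its holonomy along $L$ is generated by $Z^{a}$ and $Z^{b}$, i.e.\ it coincides with that of $\mathcal{F}$. Since the germ of a foliation along a compact leaf with trivial normal bundle is determined up to fibre-preserving conjugacy by its holonomy --- the same mechanism that yields Reeb stability --- the foliations $\mathcal{F}$ and $\mathcal{F}_{0}$ agree near $L$ after a fibre-preserving conjugation fixing $L$, which is of class $C^{1}$ and smooth away from $L$ by the regularity of $Z$. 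Extending this identification outward along the direction transverse to $\mathcal{F}$, keeping the fibres of $T^{2}\times[0,\epsilon]\to T^{2}$ invariant and first contracting the $z$-coordinate if necessary, produces a fibre-preserving $C^{1}$ isotopy $\phi_{t}$ of $T^{2}\times[0,\epsilon]$ into itself that fixes $L$, is smooth on $T^{2}\times(0,\epsilon]$, and carries $\mathcal{F}$ onto a foliation of the form $\ker\bigl(dz-u(z)(a\,dx+b\,dy)\bigr)$ with $u=w\ge 0$ positive away from $z=0$, as required.
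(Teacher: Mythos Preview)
The paper does not give its own proof of this lemma; it simply cites \cite{Eyn}, Lemme 5.21, after remarking that ``results of Kopell and Szekeres imply that after a suitable isotopy one can assume that the induced foliation on tori near the ends of a stack of torus leaves is \emph{linear}.'' Your proposal is a correct outline of exactly this Kopell--Szekeres argument: read off the holonomy of $L$ as a pair of commuting germs $f,g\in\mathrm{Diff}^{\infty}_{+}([0,\delta),0)$, use the absence of further closed leaves to rule out common interior fixed points, embed $\langle f,g\rangle$ into the Szekeres flow $Z^{t}$ (this is where the drop to $C^{1}$ at $z=0$ is forced), and then invoke that the germ of a foliation along a compact leaf is determined by its holonomy to conjugate $\mathcal{F}$ to the model $\ker\bigl(dz-w(z)(a\,dx+b\,dy)\bigr)$. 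You have also correctly flagged the genuine subtlety, namely checking that ``no closed leaf other than $L$'' yields enough fixed-point-freeness to feed into Szekeres' theorem; this is handled in the cited reference and is indeed the only place where real work is needed.
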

\noindent We apply Lemma \ref{szek_lem} near each end of a stack. Then after being normalised to have length $1$ the pair $(-a,-b)$ corresponds to the (signed) slope near an end of such a stack. If the slopes at the two ends of a given stack of leaves are not equal, then this stack is \emph{stable}. This terminology stems from the fact that any foliation that is $C^0$-close to $\mathcal{F}$ has a closed torus leaf in a neighbourhood of the given stack. If the slopes agree, then the stack is called \emph{unstable}. 

Note that since the foliation $\mathcal{F}$ is transverse both to the intervals of $\widehat{N}_i \cong T^2 \times [-\epsilon,c_i+ \epsilon]$ and to its boundary, $T\mathcal{F}$ has trivial relative Euler class on $\widehat{N}_i$ and this is of course $C^0$-stable. 
We next claim that any transverse contact structure $\xi$ on $\widehat{N}_i$ is universally tight.
\begin{lem}\label{hor_univ_tight}
Let $\xi$ be a contact structure on $T^2 \times[0,1]$ that is transverse to the interval fibers and assume that $\xi$ is transverse to the boundary tori and that the characteristic foliations on each boundary torus is diffeomorphic to the suspension of an $S^1$-diffemorphism. Then $\xi$ is universally tight.
\end{lem}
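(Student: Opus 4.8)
The idea is that a contact structure transverse to the interval fibers is ``taut-like'' and that this can be promoted to universal tightness by embedding $(T^{2}\times[0,1],\xi)$ into a universally tight \emph{closed} contact manifold, namely a $3$-torus carrying a standard tight contact structure. The first step is the dictionary: transversality of $\xi$ to the interval fibers $\{\mathrm{pt}\}\times[0,1]$ means that, after rescaling its defining $1$-form, $\xi=\ker(dz-\beta_{z})$, where $z$ is the coordinate on $[0,1]$ and $z\mapsto\beta_{z}$ is a smooth path of $1$-forms on $T^{2}$; the contact condition then becomes a first order differential inequality that forces the $\beta_{z}$ to rotate ``positively''. Two consequences I would record: the pullback $\omega_{0}$ of the area form of $T^{2}$ is a closed $2$-form with $\omega_{0}|_{\xi}>0$ -- the analogue for $\xi$ of condition $(2)$ of Lemma~\ref{taut_def} -- and transversality of $\xi$ to the two boundary tori is precisely the statement that $\beta_{0}$ and $\beta_{1}$ are nowhere zero, i.e.\ that $\xi(T^{2}\times\{0\})$ and $\xi(T^{2}\times\{1\})$ are nonsingular and hence suspensions (which is also what the last hypothesis records).

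Next I would close $\xi$ up: extend the path $\beta_{z}$ across the complementary arc of a circle so that it returns from $\beta_{1}$ to $\beta_{0}$ while $dz-\beta_{z}$ stays contact. This is possible because the contact condition is open and is met by any family that rotates rapidly enough in the positive direction, using crucially that $\beta_{0}$ and $\beta_{1}$ vanish nowhere; should $\beta_{0}$ and $\beta_{1}$ have different winding classes one inserts near an unavoidable zero a local rotational model such as $-y\,dx+x\,dy$, for which the contact inequality still holds. The result is a contact structure $\bar\xi$ on $T^{3}=T^{2}\times S^{1}$ that is transverse to the fibers of $T^{3}\to T^{2}$ and restricts to $\xi$ on $T^{2}\times[0,1]$; note that automatically $\omega_{0}|_{\bar\xi}>0$, where now $\omega_{0}$ is a cohomologically nontrivial closed $2$-form on $T^{3}$.

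To finish, since $\omega_{0}|_{\bar\xi}>0$ and $d\bar\alpha|_{\bar\xi}>0$ the form $\Omega=\omega_{0}+d(t\bar\alpha)$ on $T^{3}\times[0,1]$ is symplectic and dominates $\bar\xi$ along the boundary, so $(T^{3},\bar\xi)$ is weakly semi-fillable and therefore tight by Gromov--Eliashberg. Every tight contact structure on $T^{3}$ is universally tight: by Kanda's classification it is isotopic to one of the $S^{1}$-invariant models, whose pullback to $\mathbb{R}^{3}$ is the standard Bennequin-tight contact structure; hence $\bar\xi$ is universally tight. Finally $\pi_{1}(T^{2}\times[0,1])=\mathbb{Z}^{2}$ injects into $\mathbb{Z}^{3}=\pi_{1}(T^{3})$, so the universal cover $\mathbb{R}^{2}\times[0,1]$ of $T^{2}\times[0,1]$ embeds in $\mathbb{R}^{3}=\widetilde{T^{3}}$ compatibly with the contact structures, and an overtwisted disc for the pullback of $\xi$ would be an overtwisted disc for $\widetilde{\bar\xi}$ on $\mathbb{R}^{3}$, which is absurd. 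Thus $\xi$ is universally tight.

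The step I expect to be the main obstacle is the closing-up: verifying that the path $\beta_{z}$ can always be completed over the circle within contact structures transverse to the fibration, in particular dealing with a possible winding discrepancy between the two ends. The remaining ingredients are the routine correspondence between contact structures transverse to a product fibration and paths of $1$-forms, the computation that $\Omega$ is symplectic, and the cited facts that weak (semi-)fillings are tight and that tight contact structures on the $3$-torus are universally tight.
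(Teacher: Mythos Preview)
Your approach is genuinely different from the paper's. The paper attaches half-infinite vertically invariant ends (via convexity of the boundary tori) to obtain a contact structure on $T^2\times\mathbb{R}$, builds a globally transverse vector field whose flow is eventually periodic, and then uses the resulting complete flat connection to write the pullback on $\mathbb{R}^3$ explicitly as $\ker(dz+f\,dy)$ with $\partial_x f>0$, which is the standard tight structure. No classification results or fillings are invoked. Your route---close up to $T^3$, use a dominating closed $2$-form to get tightness, then Kanda to get universal tightness---is more structural and leans on heavier machinery, but is perfectly reasonable once the details are in place.

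Two points deserve tightening. First, the closing-up. Your worry about a ``winding discrepancy'' is unfounded, and your fix is in any case wrong-signed: for $\beta=-y\,dx+x\,dy$ one has $f_y-g_x=-2<0$, so the contact inequality fails at the zero. The suspension hypothesis is exactly what rules out any discrepancy: a suspension foliation admits a \emph{constant} transverse vector field $a\partial_x+b\partial_y$ (this is how the paper itself uses the hypothesis), so $af+bg>0$ everywhere and $(f,g)$ is null-homotopic. (Note also that your parenthetical ``nonsingular and hence suspensions'' is false---nonsingular line fields on $T^2$ can have Reeb annuli---so the suspension hypothesis is doing real work here.) Once both $\beta_0$ and $\beta_1$ are null-homotopic, writing $\beta_z=r_z e^{i\theta_z}$ the contact condition becomes $(f_y-g_x)-r_z^2\,\partial_z\theta_z>0$, and one closes up simply by concatenating with enough clockwise rotation through constant forms; no zeros are needed.

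Second, your filling. On $T^3\times[0,1]$ with $\Omega=\omega_0+d(t\bar\alpha)$ the end $t=0$ is symplectically \emph{concave} for $\bar\xi$ (the boundary orientation is reversed and $\Omega|_{t=0}=\omega_0$ still dominates the \emph{positive} contact structure), so this is a cobordism, not a semi-filling, and Gromov--Eliashberg does not apply as stated. The standard repair is to place a negative contact structure on the other end---e.g.\ any $C^0$-small negative perturbation of the foliation $\ker(dz)$, which $\omega_0$ also dominates---exactly as the paper does in its Theorem~\ref{taut _fill_bound}; alternatively just cite the well-known fact that a closed dominating $2$-form forces tightness. With these corrections your argument goes through.
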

\begin{proof}
After an initial $C^{\infty}$-small perturbation we may assume without loss of generality that the boundary tori are convex. We let $\frac{\partial}{\partial t}$ denote the coordinate vector field given by the second coordinate in the product $T^2 \times[0,1]$, which is in particular transverse to $\xi$. Near the ends we choose contact vector fields $X_0,X_1$ that are transverse to $\partial (T^2 \times[0,1])$. We let $s$ be the coordinate given by the flow of $X_0$ resp.\ $X_1$. Then near $T^2 \times \{0\}$ and $T^2 \times \{1\}$ respectively the contact structure is the kernel of a $1$-form
$$\lambda_i = \beta_i + f_ids, i =0,1 \, $$
where $ \beta_i$ and $f_i$ are independent of $s$. Thus we may add on half infinite ends to obtain a contact structure $\xi'$ on $M = T^2 \times \mathbb{R}$ that is $s$-invariant oustide $M = T^2 \times [0,1]$. Since the characteristic foliations near the boundary tori are given by suspension foliations, we may choose linear vector fields on $T^2$
$$a_i\frac{\partial}{\partial x} + b_i\frac{\partial}{\partial y}$$
with rational slopes $\frac{b_i}{a_i} \in \mathbb{Q}$ that are transverse to the characteristic foliations on $T^2 \times\{0\}$ and $T^2 \times\{1\}$ respectively. For sufficiently large $C$ the vector fields
$$Y_i =  \frac{\partial}{\partial s} + C(a_i\frac{\partial}{\partial x} + b_i\frac{\partial}{\partial y})$$
are then transverse to $\xi'$ outside some compact set. By using a partition of unity we may then extend $\frac{\partial}{\partial t}$ to a vector field $Y$ that agrees with $Y_0$ and $Y_1$ near the ends and is still transverse to $\xi'$.

Considering the identification $M \cong T^2 \times \mathbb{R}$ given by the flow of $Y$ makes $\xi'$ everywhere transverse to the $\mathbb{R}$-fibers and we denote the $\mathbb{R}$-coordinate by $t$. Note that the flow induced by $Y$ is just translation in the $s$-direction composed with a periodic linear flow 
$$\Phi^{t}_i(x,y) = (x,y) + t(Ca_i,Cb_i)$$
in the torus direction for $|t|$ sufficiently large. Using the translation invariance of $\xi'$ near the ends with respect to the $s$-coordinate and the periodicity of the flow $\Phi^{t}_i$, we deduce that the contact structure is periodic in the $t$ coordinate for $|t|$ sufficiently large. It follows that $\xi'$ defines a complete connection on $T^2 \times \mathbb{R}$ and that $\xi'$ is universally tight. To see this latter claim one takes the pull-back to the universal cover $\mathbb{R}^2 \times \mathbb{R}$ of $T^2 \times \mathbb{R}$ and lifts the family of curves $\{y = pt\}$ using the completeness of the connection. This gives coordinates $(x,y,z)$ on $\mathbb{R}^2 \times \mathbb{R}$ so that the contact structure is the kernel of
$$dz + f(x,y,z)dy.$$
By the contact condition $\frac{\partial f}{\partial x} > 0$, thus choosing new coordinates $(x',y',z') = (f(x,y,z),y,z)$ we see that $\xi'$ is contactomorphic to the standard contact structure on $\mathbb{R}^3$, which is tight. It follows in particular that $\xi$ itself is universally tight.
\end{proof}
\subsection{Movies of transverse contact structures}
Giroux \cite{Gir} has classified contact structures on the product $T^2 \times[0,1]$ by considering the \emph{movies} given by the family of characteristic foliations on the torus slices $F_t = T^2 \times \{t\}$. Although it is in general difficult to describe precisely which movies occur, Giroux proved the existence of a normal form for tight contact structures with certain boundary constraints, which then yields a classification up to isotopy relative to the boundary. One of the key points in Giroux's classification is that the isotopy class of a tight contact structure on $T^2 \times[0,1]$ is essentially determined by its ``feuilles'' or sheets.
\begin{defn}
Let $\xi$ be a contact structure on $T^2 \times[0,1]$. A \textbf{sheet} is a properly embedded surface $S$, such that the intersection $S \cap F_t$ is either empty or a smooth Legendrian curve and all singularities of $\xi(F_t)$ have the same sign. The collection of all sheets is called the \textbf{feuillage} associated to $\xi$.
\end{defn}
Since we are only interested in contact structures up to isotopy, we will always assume that all movies are $C^1$-generic. For us this will mean that all closed orbits and singularities of the characteristic foliations $\xi(F_t)$ are isolated and are non-degenerate or of birth-death type and that there is at most one degenerate orbit/singularity of birth-death type for each level $F_t$. Here a closed orbit is non-degenerate if the return map $\phi(t)$ has non-trivial linear holonomy. A non-degenerate closed orbit is \emph{repelling} if $\phi'(0)>1$ and it is \emph{attractive} if $\phi'(0)<1$, where the return map is taken in the direction determined by the induced orientation on $\xi(F_t)$ and $0$ corresponds to the closed orbit. Note that a non-degenerate closed orbit of $\xi(F_t)$ can be realised as the intersection of a small annulus $A$ with $F_t$ so that the intersection of $A$ with nearby levels is also a non-degenerate closed orbit. We will also always assume that the characteristic foliations on the boundary tori $F_0,F_1$ are given by suspension foliations.

The key classification results for contact structures on thickened tori are due to Giroux and independently Honda \cite{Hon}. The following is a special case of (\cite{Gir}, Th\'eor\`eme 1.5) that is tailored to our needs.
\begin{thm}\label{Gir_class}
Let $\xi$ be a universally tight contact structure on $T^2 \times [0,1]$ and assume that the characteristic foliations on the boundary are given by suspension foliations with precisely two non-degenerate closed orbits and that the relative Euler class of $\xi$ is trivial. Then $\xi$ is isotopic to a contact structure whose associated movie consists of suspension foliations.
 
\end{thm}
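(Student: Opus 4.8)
The plan is to run Giroux's procedure for putting the movie of a tight contact structure on $T^2\times[0,1]$ into normal form; the statement is the special case of (\cite{Gir}, Th\'eor\`eme 1.5) in which both boundary dividing sets are minimal and the relative Euler class vanishes, and the argument follows Giroux's, with the three hypotheses entering at the three points where it could break down.

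After a $C^1$-small perturbation we may assume the movie $\{\xi(F_t)\}_{t\in[0,1]}$ is generic in the sense fixed above; in particular $F_0$ and $F_1$ are convex, each with dividing set a pair of parallel essential curves tracking the two closed orbits of the corresponding boundary suspension. The first step is to arrange that every slice has the Poincar\'e--Bendixson property. By Example \ref{torus_ex} this reduces, at a slice carrying singularities, to producing a closed orbit of $\xi(F_t)$ on it. The two closed orbits of $\xi(F_0)$ persist: each is cut out by an embedded annulus $A$ as above, $A\cap F_t$ remains a non-degenerate closed orbit on a maximal subinterval of $[0,1]$, and a closed orbit can only leave the movie through a birth--death bifurcation with another closed orbit. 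A compactness argument together with universal tightness --- which rules out the Lutz-type configuration one would see were a closed orbit to collapse onto an overtwisted disc in some cover --- shows that after an isotopy of $\xi$ every singular slice still carries a closed orbit. This is the step whose details I would be least confident reproducing.

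With Poincar\'e--Bendixson available at each level, the only remaining obstruction to eliminating the singularities of the movie is the relative Euler class of $\xi$ on $T^2\times[0,1]$, computed with respect to the boundary trivialisation of $\xi$ furnished by the closed orbits: its vanishing is exactly what allows the positive and negative singularities occurring throughout the family to be paired and cancelled without creating new ones on $F_0$ or $F_1$. Concretely, at a generic singular slice one applies Giroux's elimination lemma to an elliptic--hyperbolic pair of equal sign joined by a leaf of $\xi(F_t)$; universal tightness keeps the relevant discs embedded and rules out the obstructing bigon configurations, and one then pushes these cancellations through the birth--death bifurcations and assembles them into a global isotopy of the movie. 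This parametrised cancellation is the technical heart of the matter and is precisely Giroux's analysis in the proof of (\cite{Gir}, Th\'eor\`eme 1.5).

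Finally, once the movie is singularity-free, $\xi$ is transverse to every torus slice, so each $\xi(F_t)$ is a non-singular foliation of $T^2$ and is therefore diffeomorphic to a suspension. Hence, after the isotopies above, the movie associated to $\xi$ consists of suspension foliations, as claimed.
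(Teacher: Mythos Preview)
The paper does not prove this statement at all: it is stated as a special case of Giroux's Th\'eor\`eme~1.5 in \cite{Gir} and used as a black box. So you are not so much diverging from the paper's proof as attempting to supply what the paper deliberately outsources. Your identification of the result with Giroux's classification is exactly right, and the overall shape of the sketch---arrange Poincar\'e--Bendixson on every slice, cancel singularities in families using the Euler-class hypothesis, conclude---is in the spirit of Giroux's argument.

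There is, however, a genuine gap in your final step. You write that once the movie is singularity-free, ``each $\xi(F_t)$ is a non-singular foliation of $T^2$ and is therefore diffeomorphic to a suspension.'' This implication is false: $T^2$ carries non-singular oriented line fields with Reeb components (e.g.\ the foliation directed by $\cos(2\pi y)\,\partial_x+\sin(2\pi y)\,\partial_y$), and such a foliation admits no closed transversal and is not a suspension. Nothing you have arranged so far rules these out; in particular, merely eliminating singularities on each slice does not force the characteristic foliations to be transverse to a fixed circle direction. Giroux's proof that a universally tight structure with the stated boundary data is \emph{rotative} requires the full analysis of the feuillage (the sheets and their bifurcations), not just a singularity count, and it is precisely this analysis that excludes Reeb-type behaviour in the movie. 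Your sketch of the earlier steps is also, as you candidly note, quite loose---the appeal to universal tightness to keep closed orbits alive, and the ``parametrised cancellation,'' are the substantial parts of \cite{Gir} and cannot really be summarised in a sentence---but the last step is where the argument as written actually breaks.
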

\noindent Contact structures whose movies consist of suspension foliations are referred to as ``rotatives" by Giroux in \cite{Gir}. A key tool in manipulating the movies of a given contact structure is the so-called elimination lemma of Giroux and Fuchs (see e.g.\ \cite{Gei}, Lemma 4.6.26).
\begin{lem}[Elimination Lemma]\label{elim_lem}
Let $e_+,h_+$ be non-degenerate positive elliptic resp.\ hyperbolic singularities of the characteristic foliation on $F_{t_0}$ and assume they are connected by some trajectory $E$ of $\xi(F_{t_0})$. Then there is a $C^0$-small isotopy with support in a neighbourhood of $E$ eliminating the pair of singularities.

Moreover, if $U$ is some compact neighbourhood of $E$ such that $d\alpha|_{F_{t_0}} > 0$ for some contact form $\alpha$, then this is also true after elimination.
\end{lem}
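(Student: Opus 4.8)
The first statement is the elimination lemma of Giroux and Fuchs, for which I would localise and then invoke \cite{Gei}; let me nevertheless recall the mechanism, since the addendum must be reconciled with it. Because the germ of $\xi$ along a surface is determined by the characteristic foliation, everything happens near the compact arc $\overline{E}:=\{e_+\}\cup E\cup\{h_+\}$. I would choose a small disc $D\subset F_{t_0}$ containing $\overline{E}$ in its interior and no singularities of $\xi(F_{t_0})$ besides $e_+,h_+$, together with a tubular chart $V$ of $D$ in $M$, put $\xi(F_{t_0})|_D$ into the standard normal form of an elliptic--hyperbolic pair joined by a trajectory, write the accompanying model for $\xi$ on $V$, and quote \cite{Gei} (Lemma 4.6.26): it replaces $D$, by a $C^0$-small isotopy supported in a small ball $B\ni\overline{E}$ and hence leaving $F_{t_0}$ unchanged outside $B$, by a surface $F'_{t_0}$ whose characteristic foliation has no singularity near $\overline{E}$. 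The feature to keep in mind is that this isotopy, though $C^0$-small, is necessarily $C^1$-large on $B$ --- cancelling the two tangencies of $\xi$ with the surface forces its tangent planes to rotate by a definite amount --- so the $C^1$-type positivity $d\alpha|_{F_{t_0}}>0$ is not inherited for free, which is why the addendum needs a separate argument.

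For the addendum, observe first that $d\alpha|_{F_{t_0}}>0$ holds not only on $U$ but also at $e_+$ and $h_+$, which is exactly what positivity of these singularities means; hence $d\alpha|_{F_{t_0}}>0$ on a full neighbourhood $W$ of $\overline{E}$ in $F_{t_0}$, so that $\overline{E}$ lies in the positive region of $F_{t_0}$, off its dividing set. Put $\beta_0:=\alpha|_{F_{t_0}}$ on $W$, so $d\beta_0>0$ there. Then $dz+\beta_0$ is a contact form on $W\times(-\varepsilon,\varepsilon)$, and by uniqueness of contact germs along surfaces I may assume, after a diffeomorphism fixing $F_{t_0}$, that near $\overline{E}$ one has $\xi=\ker(dz+\beta_0)$ with $F_{t_0}$ corresponding to $W\times\{0\}$ and $\alpha=\mu\,(dz+\beta_0)$ for a positive function $\mu$ with $\mu\equiv 1$ along $W\times\{0\}$. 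The gain is that $d(dz+\beta_0)$ pulls back to $d\beta_0>0$ on \emph{every} graph $\{z=g(x,y)\}$ over $W$. I would therefore perform the elimination inside this chart, arranging that $F'_{t_0}$ is the graph $\{z=g\}$ with $g$ supported in $B\subset W\times(-\varepsilon,\varepsilon)$ and $\|g\|_{C^0}$ arbitrarily small; then $d(dz+\beta_0)|_{F'_{t_0}}=d\beta_0>0$ over all of $W$ with nothing further to check.

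It remains to exhibit a global contact form for $\xi$ witnessing the conclusion. Take $\alpha':=\bigl((1-s)+s\mu\bigr)(dz+\beta_0)$ on $W\times(-\varepsilon,\varepsilon)$, with $s=s(x,y,z)\in[0,1]$ smooth, equal to $0$ near $B$ and to $1$ near the boundary of the chart, and $\alpha':=\alpha$ elsewhere; since $(1-s)+s\mu>0$ and the two formulas agree where $s=1$, this is a well-defined global contact form defining $\xi$. On $F'_{t_0}$: where $s\equiv 0$ we have $\alpha'=dz+\beta_0$, so $d\alpha'|_{F'_{t_0}}=d\beta_0>0$ by the previous paragraph; where $s$ varies we have $g\equiv 0$, so $F'_{t_0}$ coincides with $F_{t_0}=W\times\{0\}$ and $\alpha'$ restricts there to $\bigl((1-s)+s\mu\bigr)\beta_0=\beta_0$ (using $\mu\equiv 1$ on $W\times\{0\}$), whence again $d\alpha'|_{F'_{t_0}}=d\beta_0>0$; and outside the chart $F'_{t_0}=F_{t_0}$ and $d\alpha'|_{F'_{t_0}}=d\alpha|_{F_{t_0}}>0$ on $U$ by hypothesis. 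Thus $d\alpha'|_{F'_{t_0}}>0$ on $U$, as required. The one delicate point, as flagged above, is the tension between the $C^1$-size of the cancelling perturbation and the $C^1$-nature of the condition $d\alpha|_{F_{t_0}}>0$; it is defused by passing, via germ-uniqueness, into the ``convex'' model $dz+\beta_0$, in which the positivity becomes an automatic consequence of the perturbed surface being a graph.
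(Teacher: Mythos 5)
Your argument is correct, and it is essentially the route the paper intends: the paper gives no proof of this lemma, only the citation to \cite{Gei} (Lemma 4.6.26) for the first part and the remark that the positivity of $d\alpha|_{F_{t_0}}$ after elimination ``is an immediate consequence of the way the elimination is carried out.'' Your normalization to the model $\ker(dz+\beta_0)$ with $d\beta_0>0$, in which the eliminated surface is a graph and therefore automatically inherits $d(dz+\beta_0)|_{F'_{t_0}}=d\beta_0>0$, together with the interpolated contact form $\alpha'$, is precisely the justification that remark leaves implicit.
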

\begin{rem}
Note that the Elimination Lemma in \cite{Gir}, gives criteria to ensure that one can eliminate singularities without altering convexity. The version above is much weaker than this and the fact that $d\alpha|_{F_{t_0}} > 0$ can be assumed to be positive is an immediate consequence of the way the elimination is carried out.
\end{rem}
\noindent Another key ingredient in manipulating the movie of a contact structure is Giroux's Flexibility Lemma (\cite{Gir}, Lemme 2.7).
\begin{lem}[Flexibility Lemma]
Let $\xi,\xi'$ be contact structures on $N =T^2 \times [0,1]$ such that all levels $F_t$ are convex and such that the characteristic foliations agree on the boundary. Assume that both $\xi$ and $\xi'$ are divided by a continuous family of curves $\Gamma_t$. Then $\xi$ is isotopic to $\xi'$ relative to $\partial N$.
\end{lem}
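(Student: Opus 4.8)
The plan is to split the argument into a hard geometric step and an easy algebraic one. Write $N=T^2\times[0,1]$ with slices $F_t$. Since the characteristic foliations of $\xi$ and $\xi'$ agree on $F_0$ and $F_1$ — which, after a harmless preliminary isotopy, we take to mean that $\xi=\xi'$ on a collar of $\partial N$ — it is enough to produce an isotopy of $N$ that is trivial near $\partial N$. The first, main step is to replace $\xi$ by a contact structure isotopic to it \emph{rel} $\partial N$ and inducing the \emph{same movie} as $\xi'$, i.e.\ with $\xi(F_t)=\xi'(F_t)$ for every $t$. Here one uses that, by convexity, each slice sits in a local normal form $\ker(\beta_t+u_t\,dt)$ with $\xi(F_t)=\ker\beta_t$ and $d\beta_t$ vanishing exactly on $\Gamma_t$; deforming $F_t$ within this neighbourhood to a graph $\mathrm{pt}\mapsto(\mathrm{pt},\,t+g_t(\mathrm{pt}))$ replaces $\beta_t$ by $\beta_t+u_t\,dg_t$, and the hypothesis that the target foliation $\xi'(F_t)$ is divided by the \emph{same} curves $\Gamma_t$ is exactly what makes the equation $\ker(\beta_t+u_t\,dg_t)=\xi'(F_t)$ solvable — this being Giroux's flexibility theorem for a single convex surface. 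Choosing the $g_t$ smoothly in $t$ and identically zero near $t=0,1$, the graphs sweep out an ambient isotopy of $N$ fixing $\partial N$ and carrying $\xi$ to a contact structure with movie $\bigl(\xi'(F_t)\bigr)_t$.

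Once $\xi(F_t)=\xi'(F_t)$ for all $t$ (with matching co-orientations), the second step is short. Rescaling the two defining $1$-forms so that each is $\beta_t+(\,\cdot\,)\,dt$ with the \emph{same} $\beta_t$ — possible because $\ker\beta_t=\xi(F_t)=\xi'(F_t)$ — the contact condition $\alpha\wedge d\alpha>0$ becomes a first-order \emph{linear} (indeed affine) inequality in the function multiplying $dt$ and its first derivatives, so the set of admissible functions is convex. The straight-line path from the one for $\xi$ to the one for $\xi'$ thus consists of contact structures, all with movie $\ker\beta_t$ and all equal to $\xi=\xi'$ near $\partial N$; Gray stability for a path that is constant near $\partial N$ on the compact manifold $N$ then supplies an isotopy of $N$ relative to $\partial N$ taking $\xi$ to $\xi'$. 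Composing with the isotopy of the first step proves the lemma.

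The main obstacle is the first step, namely the parametric and boundary-relative realisation of a prescribed divided foliation. Even non-parametrically this is the substance of Giroux's flexibility theorem: solving $\ker(\beta+u\,dg)=\mathcal{F}'$ for $g$ requires separate arguments over $\{u\neq 0\}$ and across the dividing set and must be compatible with the closed orbits of the foliations, which may be degenerate of birth--death type. Carrying this out in a one-parameter family, trivially at the two ends, and with supports controlled so that the slicewise surface isotopies assemble into a genuine ambient isotopy of $N$ fixing $\partial N$, is where the real work lies; the accompanying bookkeeping — choosing the normal forms $\beta_t$ smoothly in $t$ with $d\beta_t$ vanishing on $\Gamma_t$, and accommodating the creation and annihilation of trivial circles in the family $\Gamma_t$ — must be folded in but is routine by comparison.
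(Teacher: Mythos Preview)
The paper does not actually prove this lemma; it is quoted from Giroux (\cite{Gir}, Lemme~2.7) and used as a black box. Your two-step outline---first realise the target movie slice by slice via a parametric version of Giroux's realisation theorem for convex surfaces, then interpolate linearly in the $dt$-component and apply Gray---is precisely the strategy behind Giroux's own argument, and you have correctly located the genuine work in making the realisation step parametric and boundary-relative; so as a sketch this is fine and faithful to the source.

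One caution on the first step: the graphs $p\mapsto(p,\,t+g_t(p))$ only assemble into an ambient isotopy of $N$ if $t\mapsto t+g_t(p)$ is strictly increasing for every $p$, i.e.\ if $\partial_t g_t$ stays uniformly small. Giroux's realisation lemma for a single convex surface does give a $C^\infty$-small isotopy, and compactness of $[0,1]$ lets one choose the $g_t$ uniformly small, but this should be said explicitly rather than left implicit. Likewise, your phrase ``each slice sits in a local normal form $\ker(\beta_t+u_t\,dt)$'' suggests a $t$-invariant model valid near each $F_t$ separately, whereas what one actually has globally is $\xi=\ker(\beta_t+u_t\,dt)$ with $\beta_t,u_t$ genuinely depending on $t$; the realisation argument must be run with this in mind (the contact vector fields witnessing convexity of $F_t$ need not be $\partial_t$, and need not fit together), which is part of the ``bookkeeping'' you allude to but is slightly more than routine.
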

\noindent There is also a relative version of the Flexibility Lemma, as stated in (\cite{Vog2}, Lemma 3.4).
\begin{lem}[Relative Flexibility Lemma]
Let $\xi$ be a contact structure on $N =T^2 \times [0,1]$ and let $F'_t$ be a smooth collection of compact subsurfaces such that $\xi(F'_t)$ is transverse to $\partial F'_t$ and $d\alpha|_{F'_t} >0$ for some contact form $\alpha$. Let $\lambda_t$ be a family of $1$-forms agreeing with $\alpha|_{F'_t}$ near $\partial F'_t$ and $\alpha|_{F'_t}$ for $t=0,1$ and such that $d\lambda_t|_{F'_t} > 0$. Then $\xi$ is isotopic to a contact structure whose characteristic foliation is given by $\lambda_t$ on $\cup_t \thinspace F'_t$ and which agrees with $\xi$ outside this set.
\end{lem}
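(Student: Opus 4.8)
The plan is to connect the characteristic foliations $\xi(F'_t)$ to the prescribed family $\lambda_t$ through a path of admissible ``movie data'', to realise that path by a path of contact structures which stays equal to $\xi$ off $\bigcup_t F'_t$, and to conclude with a Gray--Moser stability argument.

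First I would localise. Enlarge each $F'_t$ to a subsurface $\widehat F'_t\supset F'_t$, smoothly in $t$, small enough that $d\alpha$ is still positive on $\widehat F'_t$ and $\xi(\widehat F'_t)$ is still transverse to $\partial\widehat F'_t$; both are open conditions and hold on the compact set $F'_t$ uniformly in $t$. After a $C^\infty$-small perturbation near $R:=\bigcup_t F'_t$ one may assume each $\widehat F'_t$ is convex, and since $d\alpha|_{\widehat F'_t}>0$ rules out any negative region, the dividing set of $\widehat F'_t$ is empty. Writing $\widehat N:=\bigcup_t\widehat F'_t$ for the resulting neighbourhood of $R$, Gray stability relative to $N\setminus\widehat N$ reduces the problem to constructing a smooth family $\xi_\sigma$, $\sigma\in[0,1]$, of contact structures on $\widehat N$ with $\xi_0=\xi$, with $\xi_\sigma=\xi$ near $\partial\widehat N$, near $\bigcup_t\partial F'_t$ and on the slices $t\in\{0,1\}$ for every $\sigma$, and with $\xi_1(F'_t)=\ker\lambda_t$. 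The interpolation of the movies is then easy: extend $\lambda_t$ over $\widehat F'_t$ by declaring it equal to $\alpha|_{\widehat F'_t}$ outside $F'_t$ (this is smooth as $\lambda_t=\alpha|_{F'_t}$ near $\partial F'_t$), call the result $\widehat\lambda_t$, and set $\lambda^\sigma_t:=(1-\sigma)\,\alpha|_{\widehat F'_t}+\sigma\,\widehat\lambda_t$. Being a convex combination of positive area forms, $d\lambda^\sigma_t=(1-\sigma)\,d\alpha|_{\widehat F'_t}+\sigma\,d\widehat\lambda_t>0$ on all of $\widehat F'_t$ for every $\sigma$, while $\lambda^\sigma_t$ equals $\alpha|_{\widehat F'_t}$ near $\partial\widehat F'_t$, near $\partial F'_t$ and on $t\in\{0,1\}$ independently of $\sigma$, with $\lambda^0_t=\alpha|_{\widehat F'_t}$ and $\lambda^1_t=\widehat\lambda_t$.

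The crux is to realise this path of movies by contact structures. Here I would use that a convex surface with empty dividing set has a standard germ: near $\widehat F'_t$, in the normal coordinate supplied by a contact vector field transverse to it, $\xi=\ker(\beta_t+u\,d\nu)$ with $\beta_t$ directing $\xi(\widehat F'_t)$, $d\beta_t>0$ and $u>0$; and by Giroux's Flexibility Lemma applied to this convex surface (doubling along $\partial\widehat F'_t$ if one wants the stated closed-surface version verbatim) this germ depends, up to isotopy rel $\partial\widehat F'_t$, only on the foliation $\xi(\widehat F'_t)$. Running this construction over the whole family with a single normalising function $u$, each admissible movie $\{\lambda^\sigma_t\}_t$ is realised by a contact structure $\xi_\sigma=\ker(\beta^\sigma_t+u\,d\nu)$ with $\beta^\sigma_t$ directing $\ker\lambda^\sigma_t$; the contact condition takes, pointwise on the slices, the familiar form
\[
\beta^\sigma_t\wedge du+u\,d\beta^\sigma_t>-\,\beta^\sigma_t\wedge\dot\beta^\sigma_t ,
\]
which is arranged for all $\sigma$ by taking $u$ sufficiently large on $\bigcup_t\operatorname{int}F'_t$, the positive term $u\,d\beta^\sigma_t=u\,d\lambda^\sigma_t>0$ being the dominant one, while near $\partial\widehat N$, near $\bigcup_t\partial F'_t$ and on $t\in\{0,1\}$ the form $\beta^\sigma_t$ coincides with the defining form of $\xi$ independently of $\sigma$, so that $\xi_\sigma=\xi$ there and $\xi_0=\xi$. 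A final application of Gray stability to the path $\xi_\sigma$, which is constant near $\partial\widehat N$, then produces an ambient isotopy of $N$ supported in $\widehat N$ carrying $\xi$ to a contact structure whose characteristic foliation on each $F'_t$ is $\ker\lambda_t$ and which agrees with $\xi$ outside $R$.

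The main obstacle, I expect, is precisely this last step: running the convex-surface normal form and the realisation simultaneously and smoothly in both the family parameter $t$ and the homotopy parameter $\sigma$, rel every piece of boundary data, and in particular controlling the transition of the coefficient $u$ across $\partial F'_t$ so that the interpolating forms remain contact there. The hypothesis $d\alpha|_{F'_t}>0$ (kept on the $\widehat F'_t$) is exactly what makes this work: it forces the dividing sets to be empty, so that no negative singularities or closed dividing curves can obstruct the interpolation, and it furnishes the positive term $u\,d\lambda^\sigma_t$ that dominates the $\sigma$-dependent error coming from $\beta^\sigma_t\wedge\dot\beta^\sigma_t$.
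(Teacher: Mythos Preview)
The paper does not prove this lemma; it is simply quoted with a reference to Vogel (\cite{Vog2}, Lemma 3.4), so there is no proof in the paper to compare your proposal against.

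Your sketch is in the right spirit---linearly interpolate the slice $1$-forms, realise the resulting movies as contact structures using that $d\lambda^\sigma_t>0$ lets a large normal coefficient dominate, then apply Gray---and this is essentially how the relative flexibility argument goes. Two points deserve tightening. First, the separate normal coordinate $\nu$ coming from a contact vector field is a red herring: the slices $F'_t$ already sit inside $F_t=T^2\times\{t\}$, so $t$ itself is the normal coordinate and the contact form already has the shape $\alpha=\beta_t+f_t\,dt$; invoking convexity and ``empty dividing set'' for a surface with transverse boundary is not the right language, and if $\nu\neq t$ the normal neighbourhoods of different slices overlap incoherently. Simply work with $\alpha^\sigma=\lambda^\sigma_t+f^\sigma_t\,dt$.

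Second, you correctly flag the transition of the normal coefficient across a collar of $\partial F'_t$ as the main obstacle, but you do not actually resolve it. Your dominance argument ``take $u$ large'' only applies where $u$ is large; in the collar one must interpolate $f^\sigma_t$ back to the original $f_t$, and while $\lambda^\sigma_t=\beta_t$ is $\sigma$-independent there, the extra term $h\,d\beta_t+\beta_t\wedge dh$ (with $h=f^\sigma_t-f_t\ge 0$) is not automatically non-negative. One needs to choose the cutoff so that $\beta_t\wedge dh$ is controlled by $h\,d\beta_t>0$, for instance by making $dh$ nearly tangent to the characteristic foliation or by bounding $|dh|$ against the (uniformly positive) density $d\beta_t$. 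This is routine but is where the genuine work lies; as written, your proposal stops just short of it.
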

Building on Giroux's work Vogel \cite{Vog2} has shown that the sheets of a contact structure that is transverse to the interval fibers of $T^2 \times [0,1]$ are of a very restricted form. The key observation is that any attractive closed orbit is contained in an annular sheet consisting entirely of coherently oriented closed orbits which is transverse to the interval fibers and either terminates at the boundary or at a degenerate closed orbit.

\begin{lem}[\cite{Vog2}, Proposition 3.14, Lemma 3.16]\label{sheets_attractive}
Let $\xi$ be a contact structure on $T^2 \times [0,1]$  that is positively transverse to the interval fibers and let  $\beta$ be an attractive orbit of the characteristic foliation $\xi(F_t)$. Then $\beta$ lies on an open annular sheet $A(\beta)$ consisting of attractive closed orbits which can be compactified to a closed annulus such that each boundary component is either contained in the boundary of $N$ or is a degenerate closed orbit of $\xi(F_{t'})$ for some $0<t' < 1$.

Moreover, the map given by projecting $A(\beta)$ to $T^2$ is a submersion and as $t$ increases $A(\beta) \cap F_t$ moves in the direction opposite to that determined by the coorientation of $\xi(F_t)$.
\end{lem}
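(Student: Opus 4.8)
The plan is to continue the attractive closed orbit $\beta$ through the movie of $\xi$, producing a maximal one-parameter family of attractive closed orbits, to show that this family sweeps out an annulus submersing onto $T^2$, and to identify the only two ways in which it can fail to extend. Since $\xi$ is positively transverse to the interval fibres we may write $\xi=\ker(dt+\eta_t)$ for a smooth family of $1$-forms $\eta_t$ on $T^2$, so that $\xi(F_t)=\ker\eta_t$ and the positive contact condition becomes the pointwise inequality $d\eta_t-\eta_t\wedge\dot\eta_t>0$ between area forms on each slice. Let $\beta$ be an attractive closed orbit of $\xi(F_{t_0})$. Being non-degenerate it has a continuation: by the property noted just before Theorem~\ref{Gir_class} it equals $A_{\mathrm{loc}}\cap F_{t_0}$ for a small embedded annulus $A_{\mathrm{loc}}$ meeting the nearby slices transversally in non-degenerate closed orbits $\beta_t$, and since attractiveness is an open condition each $\beta_t$ is again attractive.

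The crux is to show that along this family the curve $\beta_t$ genuinely moves inside $T^2$, and moves to the side \emph{opposite} to the coorientation of $\xi(F_t)$. Let $n_t\colon\beta_t\to\mathbb{R}$ be the normal component of the velocity $\partial_t\beta_t$, measured against a trivialisation of the normal bundle of $\beta_t$ oriented by the coorientation of $\xi(F_t)$; the claim is $n_t<0$ everywhere. One evaluates the contact inequality along $\beta_t$: since $\eta_t$ annihilates the orbit direction there, the inequality reduces to a sign relation between the curl term $d\eta_t$ and the rotation term $\eta_t\wedge\dot\eta_t$, and integrating this around $\beta_t$ and comparing it with the expression for the linearised return map $\phi_t'(0)$, which is $<1$ since $\beta_t$ is attractive, forces $n_t$ to be nowhere zero and negative. (Differentiating the invariance of $\beta_t$ in $t$ shows moreover that $n_t$ satisfies a first-order linear equation around the circle, consistent with this.) Since the projection $A_{\mathrm{loc}}\to T^2$ then carries the orbit direction to the non-zero direction of $\xi(F_t)$ and a direction transverse to the orbits to a vector with non-zero normal part $n_t$, it is a local diffeomorphism at every point; hence the projection of $A(\beta)$ to $T^2$ is a submersion, and the slices $\beta_t$ move monotonically opposite to the coorientation of $\xi(F_t)$.

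Now let $(t_-,t_+)\ni t_0$ be the maximal interval on which the continuation $\beta_t$ exists as a non-degenerate (hence attractive) closed orbit, and set $A(\beta)=\bigcup_{t\in(t_-,t_+)}\beta_t$. By the monotone motion the curves $\beta_t$ sweep out an increasing nested family of sub-annuli of $T^2$, so they have a Hausdorff limit $L_+$ as $t\nearrow t_+$, which is a closed $\xi(F_{t_+})$-invariant set. In the smooth category there are no Denjoy-type minimal sets, so by the Poincar\'e--Bendixson property on the torus (Example~\ref{torus_ex}) $L_+$ is either a closed orbit or a polycycle. A polycycle would contain a hyperbolic singularity of $\xi(F_{t_+})$; but every hyperbolic singularity of every $\xi(F_t)$ has positive trace --- at such a point $\eta_t$ vanishes, so the contact inequality degenerates there to $d\eta_t>0$ --- and the periodic orbit bifurcating from a homoclinic loop of such a saddle is repelling, contradicting attractiveness of the $\beta_t$, while genericity of the movie excludes polycycles with more than one saddle. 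Hence $L_+=\beta_{t_+}$ is a closed orbit, and it is degenerate, as otherwise it would admit a continuation past $t_+$ contradicting maximality. The same holds at $t_-$, and if $t_+=1$ (resp.\ $t_-=0$) the limit circle lies in $F_1\subset\partial N$ (resp.\ $F_0\subset\partial N$). Thus $A(\beta)$ compactifies to a closed annulus whose two boundary circles are degenerate closed orbits on interior slices or lie in $\partial N$; it is transverse to the slices and meets each $F_t$ either not at all or in the Legendrian closed orbit $\beta_t$, so it is the asserted annular sheet.

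The delicate step is the transversality-and-sign claim of the second paragraph: one must extract from the single inequality $d\eta_t-\eta_t\wedge\dot\eta_t>0$, read along a closed orbit, both that the orbit moves without folds --- so that the projection to $T^2$ is a genuine submersion --- and that it moves to the prescribed side, the attractive holonomy being what fixes the sign. A secondary point requiring care is the exclusion of polycycle limits, where the positivity of the trace at hyperbolic singularities (itself a consequence of the contact structure being positive) is essential.
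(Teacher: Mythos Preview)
The paper does not actually prove this lemma: it is imported verbatim from Vogel's preprint \cite{Vog2} (Proposition~3.14 and Lemma~3.16) and only used as a black box in the proof of Proposition~\ref{Giroux}. So there is no argument in the present paper against which to compare your attempt.

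Your outline is nonetheless the correct one, and it matches the strategy of the cited source: continue the non-degenerate attractive orbit through the movie, show it drifts monotonically against the coorientation, and analyse the two ways the continuation can terminate. The third paragraph is fine --- the exclusion of polycycle limits via the positive divergence $d\eta_t>0$ at singularities (forcing the saddle ratio to make any homoclinic bifurcation repelling) is exactly the right mechanism, and the standing genericity hypothesis in the paper disposes of multi-saddle cycles.

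The second paragraph, however, is only a description of what must be proved, as you yourself concede at the end. To turn it into an argument, take adapted coordinates $(s,u)$ with $\beta_{t_0}=\{u=0\}$ and write $\eta_t=a\,ds+b\,du$ with $a|_{u=0,t=t_0}=0$ and $b>0$. Then attractiveness reads $\oint(\partial_u a)/b\,ds>0$, the contact inequality on the orbit becomes $\partial_s b-\partial_u a+b\,\partial_t a>0$, and the normal velocity $v=\partial_t u_t|_{t_0}$ solves $v'+\tfrac{\partial_u a}{b}v=-\tfrac{\partial_t a}{b}$. With the integrating factor $\mu=\exp\!\int_0^s(\partial_u a)/b$ one checks directly from the contact inequality that $w:=\mu v+\mu/b$ satisfies $w'<0$; since $\mu(2\pi)>1$ and $v$, $b$ are periodic, this forces $w(0)<0$ and hence $v(s)<-1/b(s)<0$ for every $s$. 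That single estimate simultaneously yields the strict sign, the nowhere-vanishing of $n_t$, and the submersion property. Without carrying this computation through, your second paragraph remains a plan rather than a proof.
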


\begin{figure}[t]
\psset{xunit=.4pt,yunit=.4pt,runit=.4pt}
\begin{pspicture}(928.8125,380.4125061)
{
\newrgbcolor{curcolor}{0 0 0}
\pscustom[linewidth=5.05960894,linecolor=curcolor]
{
\newpath
\moveto(2.54139709,377.89663696)
\lineto(406.28431702,377.89663696)
\lineto(406.28431702,-10.67480469)
\lineto(2.54139709,-10.67480469)
\closepath
}
}
{
\newrgbcolor{curcolor}{0 0 0}
\rput(-30,300){$t_{max}$}\rput(40,160){$A_{+}$} \rput(210,130){$A_{-}$} \rput(-30,-10){$F_0$}\rput(-30,380){$F_1$}\pscustom[linewidth=6,linecolor=curcolor]
{
\newpath
\moveto(51.55918,-8.5245939)
\curveto(62.18201,71.0091061)(64.835,89.3364061)(99.03635,169.2215861)
\curveto(109.86048,189.3162961)(132.52833,229.3023961)(160.72265,238.8683161)
\curveto(170.65006,242.2365461)(179.70324,244.8073661)(187.98609,244.5290461)
}
}
{
\newrgbcolor{curcolor}{0 0 0}
\pscustom[linewidth=1,linecolor=curcolor]
{
\newpath
\moveto(187.98609,244.5290461)
\curveto(213.55415,243.6699261)(231.75173,229.9678161)(245.50847,192.4550861)
\curveto(249.84987,157.3737461)(239.3286,122.5676161)(217.2242,87.3992061)
\curveto(212.11993,81.2302061)(207.79791,78.9869061)(203.97863,79.5314061)
}
}
{
\newrgbcolor{curcolor}{1 0 0}
\pscustom[linewidth=6,linecolor=curcolor]
{
\newpath
\moveto(203.97863,79.5314061)
\curveto(185.82687,82.1192061)(179.0309,147.6783861)(153.5846,154.0692961)
}
}
{
\newrgbcolor{curcolor}{0 0 0}
\pscustom[linewidth=1,linecolor=curcolor]
{
\newpath
\moveto(153.5846,154.0692961)
\curveto(123.68603,143.3819761)(123.73198,122.3944161)(126.31048,91.4398061)
\curveto(138.15658,38.9414061)(168.44466,23.3288061)(190.96644,23.3872061)
}
}
{
\newrgbcolor{curcolor}{0 0 0}
\pscustom[linewidth=6,linecolor=curcolor]
{
\newpath
\moveto(190.96644,23.3872061)
\curveto(205.18108,23.4242061)(214.29367,39.8050061)(222.29526,46.9931061)
\curveto(246.23238,86.6016061)(254.13596,120.6524061)(265.41928,147.5452161)
\curveto(273.46822,166.7291761)(278.91854,188.0421261)(286.44195,206.9973261)
\curveto(292.90074,223.2702061)(299.86122,247.3831961)(304.43316,256.4004061)
\curveto(310.70381,268.7679661)(314.83316,291.7077561)(329.39174,289.4297361)
}
}
{
\newrgbcolor{curcolor}{0 0 0}
\pscustom[linewidth=1,linecolor=curcolor]
{
\newpath
\moveto(329.39174,292.4398861)
\curveto(405.2403,295.2524161)(395.67272,-70.1896939)(396.0009,-4.5449939)
}
}
{
\newrgbcolor{curcolor}{0 0 0}
\pscustom[linewidth=1,linecolor=curcolor,linestyle=dashed,dash=3 3]
{
\newpath
\moveto(1.051555,245.9931661)
\lineto(405.11257,244.9830161)
}
}
{
\newrgbcolor{curcolor}{0 0 0}
\pscustom[linewidth=1,linecolor=curcolor,linestyle=dashed,dash=3 3]
{
\newpath
\moveto(1.051555,291.9931661)
\lineto(405.11257,290.9830161)
}
}
{
\newrgbcolor{curcolor}{0 0 0}
\pscustom[linewidth=5.05960894,linecolor=curcolor]
{
\newpath
\moveto(522.54138184,377.89663696)
\lineto(926.28430176,377.89663696)
\lineto(926.28430176,-10.67480469)
\lineto(522.54138184,-10.67480469)
\closepath
}
}
{
\newrgbcolor{curcolor}{0 0 0}
\pscustom[linewidth=6,linecolor=curcolor]
{
\newpath
\moveto(571.55918,-8.5245939)
\curveto(582.18201,71.0091061)(584.835,89.3364061)(619.03635,169.2215861)
\curveto(629.86048,189.3162961)(652.52833,229.3023961)(680.72265,238.8683161)
\curveto(690.65006,242.2365461)(699.70324,244.8073661)(707.98609,244.5290461)
}
}
{
\newrgbcolor{curcolor}{0 0 0}
\pscustom[linewidth=1,linecolor=curcolor]
{
\newpath
\moveto(707.98609,244.5290461)
\curveto(733.55415,243.6699261)(751.75173,229.9678161)(765.50847,192.4550861)
\curveto(769.84987,157.3737461)(759.3286,122.5676161)(737.2242,87.3992061)
\curveto(732.11993,81.2302061)(727.79791,78.9869061)(723.97863,79.5314061)
}
}
{
\newrgbcolor{curcolor}{0 0 0}
\pscustom[linewidth=6,linecolor=curcolor]
{
\newpath
\moveto(710.96644,23.3872061)
\curveto(725.18108,23.4242061)(734.29367,39.8050061)(742.29526,46.9931061)
\curveto(766.23238,86.6016061)(774.13596,120.6524061)(785.41928,147.5452161)
\curveto(793.46822,166.7291761)(798.91854,188.0421261)(806.44195,206.9973261)
\curveto(812.90074,223.2702061)(819.86122,247.3831961)(824.43316,256.4004061)
\curveto(830.70381,268.7679661)(834.83316,291.7077561)(849.39174,289.4297361)
}
}
{
\newrgbcolor{curcolor}{0 0 0}
\pscustom[linewidth=1,linecolor=curcolor]
{
\newpath
\moveto(849.39174,290.4398861)
\curveto(925.24035,293.2524161)(915.67275,-70.1896939)(916.00085,-4.5449939)
}
}
{
\newrgbcolor{curcolor}{0 0 0}
\pscustom[linewidth=1,linecolor=curcolor,linestyle=dashed,dash=3 3]
{
\newpath
\moveto(521.05155,245.9931661)
\lineto(925.11255,244.9830161)
}
}
{
\newrgbcolor{curcolor}{0 0 0}
\pscustom[linewidth=1,linecolor=curcolor,linestyle=dashed,dash=3 3]
{
\newpath
\moveto(521.05155,291.9931661)
\lineto(925.11255,290.9830161)
}
}
{
\newrgbcolor{curcolor}{0 0 0}
\pscustom[linewidth=1,linecolor=curcolor]
{
\newpath
\moveto(710.57964,23.7596061)
\curveto(689.51009,22.8766061)(665.08944,25.1631061)(660.07201,51.0337061)
\curveto(665.59163,66.4567061)(698.94462,75.5289061)(725.71163,80.3282061)
}
}
\end{pspicture} \caption{A sheet before and after (a partial) straightening. The thickened segments correspond to attractive closed orbits oriented positively resp.\ negatively ($A_+$/$A_-$) and degenerate orbits correspond to tangencies of sheets with the levels of $T^2\times [0,1]$.}\label{sheet_fig}
\end{figure}
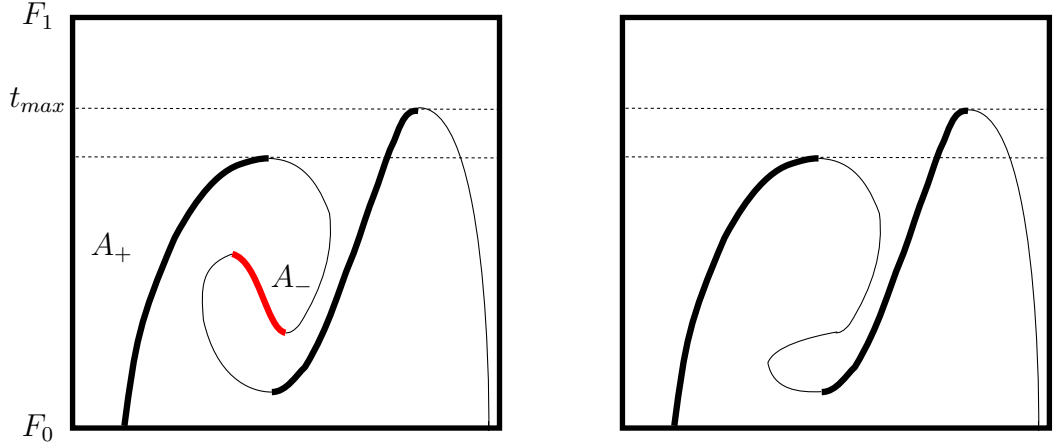

\noindent Although the way a sheet lies in the product  $N =T^2 \times [0,1]$ can be quite complicated they can always be straightened so that they have at most one tangency with the levels of $N$ (cf.\ \cite{Gir}, Proposition 3.22). This straightening is particularly easy to describe in the case that the contact structure is \emph{elementary}. This means that there are coordinates $(x,y,t) \in S^1 \times S^1 \times [0,1] = N$ so that $S^1 \times (y ,t)$ is either Legendrian (i.e.\ tangent) or transverse to the contact structure $\xi$. In this case straightening sheets amounts to straightening a collection of properly embedded arcs given by projecting to $S^1 \times [0,1]$.
\begin{lem}[Straightening sheets]\label{straight_lem}
Let $\xi$ be an elementary contact structure on $N =T^2 \times [0,1]$. Let $A$ be an annular sheet such that $\partial A$ lies in a single boundary component of $\partial N$. Then after an isotopy we may assume that $A$ has precisely one tangency with the levels of $N$ (corresponding to a degenerate closed orbit of the characteristic foliation). If the boundary components of $A$ lie in different components of $\partial N$, then we may isotope $A$ to be transverse to all levels of $N$.

Moreover, this isotopy can be made relative to all other sheets.

\end{lem}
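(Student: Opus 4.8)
The plan is to use the elementary structure to reduce the statement to a two-dimensional straightening problem for embedded arcs. Fix elementary coordinates $N=S^1_x\times S^1_y\times[0,1]_t$ and let $\pi\colon N\to B:=S^1_y\times[0,1]_t$ be the projection forgetting the first factor. Since every circle $S^1\times\{(y,t)\}$ is either Legendrian or transverse to $\xi$, a Legendrian one is tangent to the characteristic foliation $\xi(F_t)$ and hence is a closed orbit, and between two consecutive such circles $\xi(F_t)$ is transverse to $\partial_x$ with a $\partial_y$-component of fixed sign, so there are no other closed orbits. Therefore an annular sheet $A$, being a union of closed orbits, equals $\pi^{-1}(\gamma)$ for the properly embedded arc $\gamma:=\pi(A)\subset B$, and $\partial\gamma\subset S^1\times\{0,1\}$ because $\partial A\subset\partial N$; similarly, the remaining sheets project to pairwise disjoint properly embedded arcs $\gamma_1,\dots,\gamma_k$, all disjoint from $\gamma$. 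Two further remarks pin down the reduction. First, $A$ is tangent to a level torus of $N$ exactly along $\pi^{-1}$ of the critical set of $t|_\gamma$, and such a tangency is a degenerate closed orbit precisely when the corresponding critical point of $t|_\gamma$ is nondegenerate (of birth-death type). Second, any isotopy $\phi_s$ of $B$ supported away from $\gamma_1\cup\dots\cup\gamma_k$ lifts to the ambient isotopy $\mathrm{id}_{S^1}\times\phi_s$ of $N$, which preserves the elementary structure, moves $A$ to $\pi^{-1}(\phi_s(\gamma))$, and is the identity near every other sheet. Thus it is enough to isotope $\gamma$ in $B$, rel a neighbourhood of $\gamma_1\cup\dots\cup\gamma_k$, to an arc on which $t$ has a single nondegenerate maximum when $\partial\gamma$ lies in one boundary circle of $B$, and no critical point at all when the two endpoints of $\gamma$ lie in different boundary circles.

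To carry out this two-dimensional step I would first put $t|_\gamma$ in Morse position. If $\partial\gamma\subset S^1\times\{0\}$ then $t|_\gamma$ is positive on the interior of $\gamma$, so it has at least one interior critical point and, by the alternation of extrema along an arc, an odd number of them with the outermost two being maxima; the target is a single ``arch''. If the endpoints lie in different boundary circles the target is a strictly monotone $t|_\gamma$. Whenever $t|_\gamma$ has more critical points than the target there is a local maximum together with a consecutive local minimum forming a cancelling pair: the sub-arc of $\gamma$ joining them, closed up by a short arc in some level circle $t^{-1}(c)$, bounds an embedded disc $D$, and pushing that sub-arc across $D$ destroys both critical points by an ambient isotopy supported in a neighbourhood of $D$. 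Iterating and then perturbing generically yields an arc of the required form.

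The step I expect to be the main obstacle is making this straightening \emph{relative} to the other arcs: at each stage the cancelling disc $D$, and hence the support of the corresponding isotopy, must be chosen disjoint from $\gamma_1\cup\dots\cup\gamma_k$. The key point is that, since the $\gamma_i$ are disjoint from $\gamma$, each of them meets a candidate disc $D$ only in arcs with endpoints on the level sub-arc $D\cap t^{-1}(c)$; one therefore has freedom in the choice of cancelling pair, of the side on which to push, and of the level $c$, and a finite innermost-disc argument over the whole configuration $\gamma\cup\gamma_1\cup\dots\cup\gamma_k$ should always produce an admissible $D$. In the setting in which the lemma is applied the other sheets are moreover controlled by Lemma \ref{sheets_attractive} (each being nearly transverse to the levels), so after deleting small neighbourhoods of $\gamma_1,\dots,\gamma_k$ one is left with an essentially product region in which there is ample room to perform the cancellations. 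Pulling the resulting isotopy back by $\pi$ then yields the desired isotopy of $\xi$, supported away from all other sheets, after which $A$ has exactly one birth-death tangency with the levels of $N$ in the first case and is transverse to all of them in the second.
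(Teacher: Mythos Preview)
Your approach is correct and matches the paper exactly. The paper does not supply a detailed proof of this lemma; it only remarks, immediately before the statement, that for an elementary contact structure ``straightening sheets amounts to straightening a collection of properly embedded arcs given by projecting to $S^1\times[0,1]$'' and cites Giroux's Proposition~3.22 for the general (non-elementary) case. Your proposal carries out precisely this reduction and then fills in the two-dimensional Morse-theoretic cancellation argument that the paper leaves implicit; the relative statement, which you flag as the delicate point, is indeed handled by the elementary observation that $\gamma$ lies in a single complementary region of $\gamma_1\cup\dots\cup\gamma_k$ in the annulus, where the cancellations can be performed.
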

\noindent Using  Lemma \ref{sheets_attractive} one finds a movie of a particularly nice form for any transverse contact structure. 
\begin{prop}\label{Giroux}
Let $\xi$ be a contact structure on $T^2 \times[0,1]$ that is (positively) transverse to the interval fibers.  Assume that the boundary tori are convex with characteristic foliations given by suspensions of $S^1$-diffeomorphisms. Then $\xi$ is isotopic relative to the boundary of $T^2 \times[0,1]$ to a contact structure that is at all times transverse to the tori $F_t = T^2 \times \{t\}$ and the characteristic foliations $\xi(F_t)$ are given by suspensions.
\end{prop}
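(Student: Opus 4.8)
The plan is to deform the movie of $\xi$ into the normal form of a Giroux rotative in three stages: locate the annular sheets that carry the closed orbits of the characteristic foliations $\xi(F_t)$, straighten them, and then destroy every remaining singularity of the family. The role of the hypothesis that $\xi$ is \emph{positively} transverse to the interval fibres is precisely to make the last stage unobstructed.

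I would begin with preliminary normalisations on $N := T^2\times[0,1]$. After a $C^{\infty}$-small perturbation the movie $\xi(F_t)$ may be taken $C^1$-generic, and after a further isotopy we may assume $\xi$ is elementary, so that Lemma~\ref{straight_lem} applies (cf.\ \cite{Gir}). The key elementary remark is that, since $\partial/\partial t$ is positively transverse to $\xi$, at any point where $\xi$ is tangent to a level $F_t$ the coorientations of $\xi$ and of $F_t$ agree; hence \emph{every singularity of every $\xi(F_t)$ is positive}. By Example~\ref{torus_ex} a level $F_t$ can therefore fail to be convex only through a degenerate closed orbit, and in the Elimination Lemma~\ref{elim_lem} the forbidden configuration --- a trajectory from a negative to a positive singularity --- simply cannot occur, so \emph{any} positive elliptic--hyperbolic pair joined by a trajectory is available for cancellation. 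Since the two boundary tori already carry suspension foliations with no singularities, all isotopies below are supported in the interior and the final isotopy is relative to $\partial N$; for the same reason (transversality to the fibres and to the boundary) the relative Euler class of $\xi$ vanishes, exactly as for $T\mathcal{F}$.

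Next, locating and straightening the sheets. Given a non-degenerate closed orbit $\beta$ of some $\xi(F_{t_0})$: if $\beta$ is attractive, Lemma~\ref{sheets_attractive} places it on an open annular sheet of attractive closed orbits submersing onto $T^2$, whose closure is an annulus each of whose two boundary circles lies in $\partial N$ or is a degenerate closed orbit; if $\beta$ is repelling the same holds, with the direction of motion of the sheet reversed. Thus every closed orbit occurring in the movie --- in particular every closed orbit of the two boundary suspensions --- lies on such an annular sheet. As $\xi$ is elementary, each of these annuli projects to a properly embedded arc in an annulus $S^1\times[0,1]$, so by Lemma~\ref{straight_lem} I may isotope all of these arcs simultaneously, each isotopy relative to the remaining sheets, arranging that every sheet is either transverse to all levels of $N$, or has a single tangency with the levels --- a degenerate closed orbit of $\xi(F_{t'})$ for one value of $t'$, occurring at a birth--death paired with the end of a sheet of the opposite stability type.

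After this the straightened sheets meet each $F_t$ in a finite family of parallel essential curves, cutting $F_t$ into sub-annuli on each of which $\xi(F_t)$ has no closed orbit and only positive singularities. \textbf{The decisive step, and the one I expect to be the main obstacle,} is to run the Elimination Lemma through the one-parameter family $t\mapsto\xi(F_t)$ and cancel all of these singularities: one has to organise the cancellations coherently in $t$ --- tracking the graphic traced by the singular set of the family in $N$ minus the sheets, producing the connecting separatrices one needs by $C^0$-small isotopies and the Relative Flexibility Lemma, keeping all support disjoint from the sheets and from $\partial N$, and passing through the finitely many birth--death levels in the standard way. That no cancellation is blocked is exactly the content of there being no negative singularities. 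Once every $\xi(F_t)$ has been made nonsingular, $\xi$ is tangent to no level $F_t$, i.e.\ it is everywhere transverse to the tori $T^2\times\{t\}$; and a nonsingular oriented foliation on $T^2$ is the suspension of a circle diffeomorphism --- its closed leaves are pairwise parallel and essential, since a nullhomotopic one would, by Poincar\'e--Bendixson, enclose a singularity --- so that each $\xi(F_t)$ is a suspension, degenerate only at the finitely many sheet-tangency levels above. This exhibits $\xi$, up to isotopy relative to $\partial N$, as a contact structure transverse to all the $F_t$ with suspension characteristic foliations, which is the assertion; if in addition all levels are made convex, the final step can instead be concluded by Giroux's Flexibility Lemma.
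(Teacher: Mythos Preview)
Your approach diverges from the paper's at a crucial point and leaves a genuine gap. The paper does \emph{not} attempt a parametric Elimination Lemma across all levels; instead, after straightening it splits into two cases according to whether some sheet connects $F_0$ to $F_1$. When one does, every level is cut by sheets and the Relative Flexibility Lemma (not parametric elimination) makes all closed orbits coherently oriented, after which the ordinary Flexibility Lemma yields suspensions. When no sheet spans, the paper finds $t_- < t_+$ so that all boundary sheets live below $t_-$ or above $t_+$, arranges convex tori with two closed orbits at $t_-\!-\!\epsilon$ and $t_+\!+\!\epsilon$, and then \emph{invokes Giroux's classification} (Theorem~\ref{Gir_class}) on the middle piece $T^2\times[t_-\!-\!\epsilon,\,t_+\!+\!\epsilon]$, using that $\xi$ is universally tight (Lemma~\ref{hor_univ_tight}) and has trivial relative Euler class. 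You never call on universal tightness or on Theorem~\ref{Gir_class}, and your argument breaks precisely in this second case: your sentence ``the straightened sheets meet each $F_t$ in a finite family of parallel essential curves, cutting $F_t$ into sub-annuli'' presupposes that every level is met by a sheet from the boundary, which is exactly what fails when no sheet spans. On levels in the gap $(t_-,t_+)$ there may be no closed orbit on a boundary sheet at all, the Poincar\'e--Bendixson property is not guaranteed (Example~\ref{torus_ex} needs a closed orbit), and your parametric elimination on sub-annuli has nothing to work with.

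Two smaller issues compound this. First, you make $\xi$ elementary \emph{before} doing anything with non-convex levels, but Giroux's procedure for making a contact structure elementary (\cite{Gir}, Proposition~3.15) uses the Flexibility Lemma and hence convexity; the paper's order is the reverse---eliminate singularities on the finitely many non-convex levels first, then make elementary. Second, your assertion that Lemma~\ref{sheets_attractive} holds for repelling orbits ``with the direction of motion reversed'' is not justified: that lemma (from \cite{Vog2}) is stated and proved for attractive orbits, and the paper obtains control over repelling orbits only \emph{after} passing to elementary form, as a consequence of that normal form rather than as an independent structural fact.
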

\begin{proof}
First suppose that on all levels the characteristic foliation $\xi(F_t)$ has at least one (attractive) closed orbit. In particular, this means that $\xi(F_t)$ has the Poincar\'e-Bendixson property for $0 \leq t \leq 1$ (cf.\ Example \ref{torus_ex}). Since the contact structure is (positively) transverse to the interval fibers, all singularities are positive so that the only way that a surface $F_t = T^2 \times\{t\}$ cannot be convex is that it contains one of the finitely many degenerate closed orbits by Lemma \ref{Poin_Ben}. We let $0 < t_0 < t_1<\ldots < t_k < 1$ be the non-convex levels. One can then use the Elimination Lemma to eliminate all singularities on $F_{t_i}$ via an isotopy with support near $F_{t_i}$ (by genericity all singularities are non-degenerate on $F_{t_i}$). Moreover, since all singularities are positive and $F_{t_i}$ has isolated closed orbits, we may assume that the trajectories connecting them are contained in a collection of annular neighbourhoods $U_j$ such that $\partial U_j$ is transverse to the characteristic foliation and $d\alpha|_{U_j}>0$ for a contact form $\alpha$. Thus these eliminations can be chosen to have support disjoint from the closed orbits of $F_t$ for $t$ near $t_i$ and can be performed without introducing either degenerate resp.\ attractive closed orbits or negative singularities. Thus all levels $F_t$ near $F_{t_i}$ with $t\neq t_i$ remain convex.

We can now assume that we have eliminated all singularities on each non-convex level. Then using the Flexibility Lemma as in (\cite{Gir}, Proposition 3.15) we can isotope $\xi$ relative to $F_0 \sqcup F_1$ to become elementary. In particular, we obtain a movie so that each closed orbit of $F_{t_i + \epsilon}$ is connected to a closed orbit of $F_{t_{i+1} - \epsilon}$ by an annular sheet $A$ that is transverse to each level $F_t$. The union of these annuli together with the closed orbits near each non-convex level $F_{t_i}$ then give the feuillage of $\xi$. Note that this can be achieved without altering the contact structure near the non-convex levels. In view of Lemma \ref{sheets_attractive} we can also assume that we did not alter the contact structure near any attractive closed orbits. Furthermore, these modifications are such that on the parts of a sheet not consisting of attractive closed orbits the Legendrian curves $\gamma_t = A \cap F_t$ are all repulsive closed orbits, except in the case that the orientations of the closed orbits at each end of the annulus are opposite, in which case $\gamma_{t'}$ is a singular circle of $\xi(F_{t'})$ for precisely one value $t_i<t'<t_{i+i}$. 

Now all sheets are properly embedded and we consider a sheet $A$ whose negative end intersects $F_0$ in an attractive closed orbit. First assume that both boundary components of $A$ are contained in $F_0$. Let $t_{max}$ be the maximum value of $t$ such that $A \cap F_{t_{max}}$ is non-empty. Then since the direction that a sheet moves near an attractive closed orbit is determined by its (co)orientation, we see that the orientation of $A \cap F_{t_{max}}$ must agree with that of $A \cap F_{0}$ (cf.\ Figure \ref{sheet_fig}). A symmetric argument applies to sheets whose ends lie in $F_1$ (in which case one considers $t_{min}$).  In a similar way a sheet whose boundary intersects both boundary components and begins at an attractive closed orbit must intersect the other boundary component in an attractive closed orbit with the same orientation. We then straighten out sheets to assume that each sheet is transverse to all levels $F_t$ or has precisely one point of tangency, which is then a degenerate closed orbit whose orientation agrees with that of the closed orbit on the boundary of the sheet.

We first assume that there is a sheet that intersects both boundary components. We may then apply the Relative Version of the Flexibility Lemma away from the degenerate closed orbits to assume that all closed orbits in the movie are in fact oriented in the same direction. In particular, the non-convex levels are given by suspension foliations. A final application of the Flexibility Lemma provides an isotopy to a contact structure whose associated movie consists entirely of suspension foliations.

In the case that no sheet intersects both boundary components, we can argue as above until the first point $t_- \in [0,1]$ where all sheets beginning at $T^2 \times \{0\}$ have disappeared. We also let $t_+$ be the value at which all sheets beginning at $T^2 \times \{1\}$ have disappeared. By isotoping the sheets beginning at $F_0$ downwards, we may assume that $t_- < t_+$. After a further isotopy, we may also assume that for some small $\epsilon > 0$ the tori $F_{t_- - \epsilon},F_{t_+ + \epsilon}$ are convex and have only $2$ closed orbits. Since the contact structure is transverse to the interval fibers, its relative Euler class on $T^2 \times [0,1]$ is trivial. Furthermore, since the contact structure is transverse to the $T^2$-slices on $T^2 \times [0,t_- - \epsilon]$ and $T^2 \times [t_+ + \epsilon,1]$, the relative Euler class is trivial on each of these pieces and hence it is also trivial on $T^2 \times [t_- - \epsilon,t_+ + \epsilon]$. The result then follows by Theorem \ref{Gir_class}, since $\xi$ is universally tight in view of Lemma \ref{hor_univ_tight}.
\end{proof}
\begin{rem}\label{Giroux_rem}
The contact structures given in Proposition \ref{Giroux} are of two kinds, either the asymptotic slopes of the characteristic foliations $\xi(F_t)$ are constant or not. In the latter case there is a time $t_-$ when all sheets intersecting $F_0$ first disappear. Near to $F_{t_-}$ one can find a linearly foliated torus $F_{t_- + \epsilon}$. The same is true for the time $t_+$ at which all sheets that intersect $F_1$ have disappeared. Moreover, after an isotopy one can assume that the characteristic foliations are all transverse to non-vanishing closed $1$-forms $\alpha_0,\alpha_1$ on $T^2 \times [0,t_- + \epsilon]$ and $T^2 \times [t_+-\epsilon,1]$ respectively. If the asymptotic slope is constant, then we can assume that the characteristic foliations $\xi(F_t)$ are all transverse to a \emph{fixed} non-vanishing closed $1$-form for all $t \in[0,1]$, provided this holds on the boundary.
\end{rem}
\begin{rem}
Using Remark \ref{Giroux_rem} it is easy to construct a closed dominating $2$-form for any $C^0$-small perturbation of a Reebless foliation $\mathcal{F}$ which is either transverse to a taut foliation or has the form $\alpha_t \wedge dt$ on each thickened stack $\widehat{N}_i \cong T^2 \times [0,1]$ where $\alpha_t$ are non-vanishing closed $1$-forms that depend smoothly on $t$. Since all torus leaves of a Reebless foliation are incompressible and no transversal of a taut foliation can be contractible, it follows that $\xi$ is taut in the sense of (\cite{ETh}, Definition 3.5.3). So the (universal) tightness of perturbations of Reebless foliations would be implied by the fact that tautness implies (universal) tightness for contact structures (cf.\ \cite{ETh}, Conjecture 3.5.14).
\end{rem}
We shall need two more ingredients for the proof of Theorem \ref{Reebless}. The first is Colin's result on glueing contact structures along linearly foliated (pre-Lagrangian) tori.
\begin{thm}[\cite{Col0}, Th\'eor\`eme 4.2]\label{Colin}
Let $\xi$ be contact structure on a manifold $M$ possibly with boundary and let $T$ be an incompressible torus in the interior of $M$ such that $\xi(T)$ is linear. If the restriction of $\xi$ to $M \setminus T$ is universally tight, then $\xi$ is also universally tight on all of $M$.
\end{thm}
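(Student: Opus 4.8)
The statement is of a local-to-global type, so the plan is to pass to the universal cover and run an innermost-disc argument. Write $p\colon\widetilde M\to M$ for the universal covering and $\widetilde\xi=p^{*}\xi$; by definition it suffices to show that $\widetilde\xi$ admits no overtwisted disc. The first step is to normalise $\xi$ near $T$. Since $\xi(T)$ is linear, $T$ is pre-Lagrangian, and the standard description of neighbourhoods of linearly foliated tori (Giroux, or the product models underlying Lemma \ref{szek_lem} and the proof of Lemma \ref{hor_univ_tight}) provides a neighbourhood $N\cong T^{2}\times[-1,1]$, $T=T^{2}\times\{0\}$, in which every slice $T^{2}\times\{t\}$ is again pre-Lagrangian with linear characteristic foliation. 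Such a ``rotative'' $N$ is contactomorphic to a product model that embeds in a tight $T^{2}\times\mathbb R$ exactly as in the proof of Lemma \ref{hor_univ_tight}; in particular $\xi|_{N}$ is tight.

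Next I would examine how $p^{-1}(N)$ lies in $\widetilde M$. As $T$ is incompressible, each component of $p^{-1}(T)$ is a copy of $\mathbb R^{2}$, the universal cover of $T$, properly embedded in $\widetilde M$, and each component $\widetilde N_{\alpha}$ of $p^{-1}(N)$ is a copy of $\mathbb R^{2}\times[-1,1]$ carrying a lift of the rotative model. Cutting $\widetilde M$ along the planes $\widetilde T_{\alpha}\subset p^{-1}(T)$ produces a disjoint collection of pieces, each contained in $p^{-1}$ of a component of $M\setminus T$. In the situations of interest $M$ is irreducible and $T$ incompressible, so the components of $M$ cut along $T$ are $\pi_{1}$-injective by an innermost-disc argument, and each of the above pieces is therefore a copy of the universal cover of a component of $M\setminus T$. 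By hypothesis $\widetilde\xi$ is tight on every one of these pieces.

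Now assume $\widetilde\xi$ were overtwisted, witnessed by an embedded disc $\Delta$ tangent to $\widetilde\xi$ along $\partial\Delta$. Being compact, $\Delta$ meets only finitely many $\widetilde N_{\alpha}$ and only compact parts of them. After a $C^{\infty}$-small isotopy supported away from a fixed collar of $\partial\Delta$ — so that the overtwisted-disc property of $\Delta$ is untouched — I may assume $\partial\Delta\cap p^{-1}(T)=\varnothing$ and $\Delta\pitchfork p^{-1}(T)$, so $\Delta\cap p^{-1}(T)$ is a finite union of circles. If this intersection is empty, or if $\Delta$ lies inside a single $\widetilde N_{\alpha}$, we contradict tightness of the corresponding cut-open piece, respectively of the rotative model $\xi|_{N}$. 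Otherwise pick a circle $c$ innermost on $\Delta$ and also innermost on its plane $\widetilde T_{\alpha}$; it bounds a subdisc $\Delta'\subset\Delta$ whose interior misses $p^{-1}(T)$, hence lies in a single cut-open piece $P$, and it bounds a disc $\delta\subset\widetilde T_{\alpha}=\partial P$ meeting $\Delta$ only along $c$. Since $P$ is simply connected and irreducible, the sphere $\Delta'\cup\delta$ bounds a ball $B\subset P$, and $B^{\circ}$ misses $p^{-1}(T)$. Pushing $\Delta'$ across $B$ and slightly past $\widetilde T_{\alpha}$ is an isotopy of $\Delta$ rel the collar of $\partial\Delta$ that strictly decreases the number of intersection circles without creating new ones. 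Iterating reduces us to the empty case, a contradiction; hence $\widetilde\xi$ is tight and $\xi$ is universally tight.

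The crucial point — and the place where linearity of $\xi(T)$ is genuinely needed — is the first step: producing a product neighbourhood $N$ of $T$ all of whose slices are pre-Lagrangian and in which $\xi$ is manifestly tight. Only this guarantees simultaneously that a disc trapped near $T$ yields a contradiction and that the pieces of $\widetilde M\setminus p^{-1}(T)$ are precisely the universal covers of the pieces on which tightness is assumed; for a general convex $T$ neither conclusion is available. By comparison, the innermost-disc bookkeeping — termination, keeping the reduction rel $\partial\Delta$, and invoking irreducibility of the complementary pieces (automatic when $\pi_{2}(M)=0$, as in the Reebless setting) — is routine.
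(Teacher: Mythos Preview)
The paper does not prove this theorem --- it is quoted from \cite{Col0} as a black box and used directly in the proof of Theorem~\ref{Reebless} --- so there is no in-paper argument to compare yours against. That said, your outline is precisely Colin's strategy: normalise $\xi$ to a rotative model on a bicollar of the pre-Lagrangian torus, lift to $\widetilde M$ where incompressibility turns $p^{-1}(T)$ into a locally finite family of separating planes, identify the complementary regions with the universal covers of the components of $M\setminus T$ via $\pi_1$-injectivity of the pieces, and then clear a hypothetical overtwisted disc off the planes by an innermost-circle argument rel a collar of $\partial\Delta$.

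Two points in the execution deserve a word. First, finding $c$ innermost on $\Delta$ \emph{and} on its plane $\widetilde T_\alpha$ simultaneously is not tautological from either condition alone; it does hold, but one needs the fact that the planes $\{\widetilde T_\alpha\}$ have a tree-like separation pattern in $\widetilde M$. The cleaner variant --- and closer to what Colin writes --- is to take $c$ innermost on its plane: then $\operatorname{int}\delta\cap\Delta=\varnothing$ already forces the sphere $\Delta'\cup\delta$ to be embedded (where $\Delta'$ is the subdisc of $\Delta$ bounded by $c$, innermost or not), and one checks directly that $\partial\Delta$ lies outside the resulting ball so that the compression can be performed rel $\partial\Delta$. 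Second, the existence of the ball $B$ uses irreducibility of the complementary pieces (equivalently of $\widetilde M$); this is not part of the displayed hypotheses but, as you observe, is automatic in the Reebless application since $\pi_2(M)=0$ by Novikov, which is the only place the paper invokes the result.
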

We will also need a version of Theorem \ref{taut _fill} for taut foliations on manifolds with boundary. We will state this in a slightly more technical fashion, which is best suited for the proof of Theorem \ref{Reebless} below. The proof is essentially the same as in the closed case except that one needs now to take care near the boundary by completing the manifold in a controlled fashion
\begin{thm}\label{taut _fill_bound}
Let $\mathcal{F}$ be a taut foliation that is transverse to $\partial M$ (if non-empty) and assume that the induced $1$-dimensional foliation on the boundary is also taut. Let $\omega$ be a dominating $2$-form which has the form $\alpha \wedge dt$ near $\partial M$, where $\alpha$ is a closed $1$-form on $\partial M$. Then any contact structure $\xi$ that is dominated by $\omega$ is universally tight. 
\end{thm}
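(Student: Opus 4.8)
The plan is to reproduce the argument behind the closed case (Theorem~\ref{taut _fill}): build an Eliashberg--Thurston symplectic thickening out of the dominating $2$-form and use a Gromov--Eliashberg holomorphic-disc argument to exclude overtwisted discs, both on the manifold and on its universal cover, after first reducing to an open manifold by a controlled completion of $M$ along $\partial M$.

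First I would complete $M$ to an open manifold $\widehat M$ by attaching half-infinite ends $\partial M\times[0,\infty)$, arranging that all the data become translation invariant near infinity. Since $\omega=\alpha\wedge dt$ near $\partial M$ with $\alpha$ a closed $1$-form on $\partial M$, it is already translation invariant and extends to a closed $2$-form $\widehat\omega$. After a $C^{\infty}$-small perturbation I may assume $\partial M$ is convex for $\xi$, and then, using convexity and Giroux's Flexibility Lemma, extend $\xi$ over the ends so that it agrees at infinity with a fixed translation-invariant model --- for instance a contact structure transverse to the interval fibers whose characteristic foliation on each slice is transverse to $\ker\alpha$ --- while leaving $\xi$ on $M$ untouched. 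The key point is that domination $\widehat\omega|_{\widehat\xi}>0$ survives over the ends: on a cylindrical end the restriction of $\alpha\wedge dt$ to $\widehat\xi$ is governed by the value of $\alpha$ on the line $\widehat\xi\cap T(\partial M\times\{t\})$, which is nonzero precisely because this line is transverse to $\ker\alpha$. Since $\mathcal{F}$ is taut, hence Reebless, Novikov's theorem gives $\pi_2(M)=\pi_2(\widehat M)=0$.

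Next I would build the symplectic thickening. Writing $\widehat\xi=\ker\widehat\lambda$, on $W=\widehat M\times(-1,1)$ with coordinate $s$ consider $\Omega_{\varepsilon}=\pi^{*}\widehat\omega+\varepsilon\,d(s\,\pi^{*}\widehat\lambda)$. A direct computation gives $\Omega_{\varepsilon}^{2}=2\varepsilon\,ds\wedge\pi^{*}\bigl(\widehat\omega\wedge\widehat\lambda+\varepsilon s\,\widehat\lambda\wedge d\widehat\lambda\bigr)$, which is a positive volume form for $\varepsilon$ small, since $\widehat\omega\wedge\widehat\lambda>0$ (this is exactly domination of $\widehat\xi$ by $\widehat\omega$) and the correction term $\varepsilon s\,\widehat\lambda\wedge d\widehat\lambda$ is dominated by it once $\varepsilon$ is small, uniformly because everything is translation invariant on the ends. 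Thus $\Omega_{\varepsilon}$ is symplectic, $\Omega_{\varepsilon}|_{\widehat\xi}=\widehat\omega|_{\widehat\xi}+\varepsilon s\,d\widehat\lambda|_{\widehat\xi}>0$ on the slice $\widehat M\times\{1\}$, and $(W,\Omega_{\varepsilon})$ is symplectically convex at infinity because all the data are translation invariant there. If $\widehat\xi$, hence $\xi$, were overtwisted, the overtwisted disc would lie in a compact set; filling it by a family of $\Omega_{\varepsilon}$-holomorphic discs with boundary on $\widehat M\times\{1\}$, the convexity at infinity together with the maximum principle would confine that family to a fixed compact region, and the failure of Gromov compactness would then give a contradiction. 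Finally, pulling everything back to the universal cover $q\colon\widetilde{\widehat M}\to\widehat M$, the form $q^{*}\widehat\omega$ still dominates $q^{*}\widehat\xi$ and, since $H^{2}(\widetilde{\widehat M};\mathbb{R})=0$, is moreover exact, while the ends remain cylindrical; running the same argument there shows $q^{*}\widehat\xi$ tight, that is, $\xi$ is universally tight.

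The hard part is the controlled completion: producing cylindrical ends on which $\xi$ is translation invariant and still dominated by $\widehat\omega$, and reconciling the normal forms near $\partial M$ (the product form $\alpha\wedge dt$ of $\omega$, the convex-surface normal form of $\xi$, and the germ of $\mathcal{F}$) so that the resulting symplectic thickening is genuinely convex at infinity. That convexity is indispensable --- it is what prevents the holomorphic discs from escaping into the ends in the compactness argument, and over a bare product $M\times(-1,1)$ with uncontrolled ends one could conclude nothing --- and it is precisely what the hypothesis $\omega=\alpha\wedge dt$ near $\partial M$, together with tautness of the induced boundary foliation, is there to supply.
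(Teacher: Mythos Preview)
Your overall strategy---complete $M$ to $\widehat M$ with translation-invariant ends, extend $\omega$ and $\xi$, form the symplectic thickening $W=\widehat M\times[-1,1]$, and run the Gromov--Eliashberg Bishop-disc argument---is exactly the route the paper takes. But you have omitted one essential ingredient: a \emph{negative} contact structure $\widehat\xi_-$ on $\widehat M$, also dominated by $\widehat\omega$, which is needed to make the lower boundary $\widehat M\times\{-1\}$ $J$-convex. Your claim that $(W,\Omega_\varepsilon)$ is ``symplectically convex at infinity'' addresses only the cylindrical $\widehat M$-ends, where translation invariance gives bounded geometry; it does not control the $s$-direction. Without $J$-convexity at $s=-1$, the Bishop family of holomorphic discs emanating from the overtwisted disc in $\widehat M\times\{1\}$ has no barrier below, and there is nothing to prevent the family from running into $\widehat M\times\{-1\}$ or escaping as $s\to -1$; the compactness argument then breaks down. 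Note too that you cannot simply extend $W$ to $s\in(-\infty,1]$: your own computation shows $\Omega_\varepsilon^2$ fails to be a volume form once $|s|$ is large.

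The paper produces $\xi_-$ by first doubling $M$ along $\partial M$---this is where the hypotheses that $\mathcal{F}$ is transverse to $\partial M$ and can be taken in product form near the boundary are used---to obtain a closed manifold with a taut foliation, and then invoking the Eliashberg--Thurston approximation theorem on the double to get a negative contact structure $C^0$-close to $\mathcal{F}$. This $\xi_-$ is then completed over the half-infinite ends exactly as you do for $\xi$, and $J$ is chosen to preserve both $\widehat\xi$ and $\widehat\xi_-$, making $W$ a genuine weak symplectic semi-filling with bounded geometry. Once you add this step, your proof becomes essentially identical to the paper's.
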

\begin{proof}
After a suitable choice of coordinates we assume that $\mathcal{F}$ is a product foliation near the boundary. We may then take the double of $M$ which carries a (taut) foliation given by doubling $\mathcal{F}$. Thus according to Theorem \ref{Eli_Th} there is a negative contact structure $\xi_-$ on $M$ that is $C^0$-close to $\mathcal{F}$. Now after a $C^{\infty}$-small perturbation we may assume that all boundary tori are convex with respect to both $\xi$ and $\xi_-$ respectively. We next attach half infinite ends $T^2 \times [0,\infty)$ to each boundary component to obtain a completion $\widehat{M}$ of $M$.

Any smooth extension of $\xi$, again denoted $\xi$, will be contact in a small neighbourhood of $M$ in $\widehat{M}$. Then on a suitable neighbourhood $N \cong T^2 \times (-\epsilon,\epsilon)$ of a boundary component $T_i \subset \partial M$ the contact structure $\xi$ is defined by a form
$$\lambda  = \beta_t + f_t dt$$
for a smooth family of $1$-forms $\alpha_t$ on $T^2$. The contact condition is then
$$f_t d\beta_t + \beta_t \wedge (d f_t - \dot{\beta}_t )> 0.$$
Since $T_i$ is convex, there is a function $g$ on $T^2$ such that
$$g d\beta_0+ \beta_0 \wedge d g > 0.$$
By taking $\epsilon$ small enough the same then holds on the neighbourhood $T^2 \times (-\epsilon,\epsilon)$. Let $\varphi(t)$ be a non-increasing cut off function that is identically $1$ for $t \leq -\frac\epsilon2$ and identically $0$ for $t \geq 0$. We then set
$$\widehat{\lambda} = \beta_{t \varphi(t-\frac\epsilon2)} +  \left(\varphi\thinspace f_t + K \thinspace (1-\varphi)g \right)dt.$$
Since the contact condition is convex in $f_t$ the form $\widehat{\lambda}$ is contact for $K$ large. Note that $\widehat{\lambda}$ is $t$ invariant for $t \geq \frac\epsilon2$ thus we can extend $\xi$ to a contact structure $\widehat{\xi}$ that is translation invariant on the half-infinite ends. Furthermore, the characteristic foliations on these ends are $C^0$-close to $\xi(T_i)$ and thus are dominated by the closed $2$-form $\widehat{\omega}$ that agrees with $\omega$ on $M$ and is equal to $\alpha \wedge dt$ on $\partial M \times [0,\infty)$. The same holds for the analogously defined extension $\widehat{\xi}_-$.

We then define a symplectic form on $W =\widehat{M} \times [-1,1]$ by setting
$$\widehat{\Omega} = \widehat{\omega} + \epsilon \thinspace d(s \thinspace \widehat{\lambda})$$
where $\widehat{\lambda}$ is a defining form for $\widehat{\xi}$ and taking $\epsilon$ small. This form is translation invariant on the half-infinite torus pieces outside a compact set. We then choose a compatible almost complex structure $J$ that is also translation invariant outside a compact set and leaves both $\widehat{\xi}$ and $\widehat{\xi}_-$ invariant. The symplectic manifold $(W,\widehat{\Omega})$ is a weak symplectic (semi-)filling of $\widehat{\xi}$ and the metric $\widehat{\Omega}(\cdot,J \cdot)$ has bounded geometry. The same is true when we pass to the universal cover. Then exactly as in the case when $M$ is closed, the Gromov-Eliashberg argument using Bishop families of $J$-holomorphic discs then shows that $\widehat{\xi}$ is universally tight on $\widehat{M}$ and \emph{a fortiori} so is $\xi$. Note that bubbling cannot occur \emph{a priori}, since by Novikov's Theorem $\pi_2(M)=0$ or $M = S^2 \times S^1$ and $\mathcal{F}$ is the product foliation which cannot be approximated by contact structures at all.
\end{proof}
\subsection{Proof of Theorem \ref{Reebless}}
\begin{proof}
Let $\mathcal{F}$ be a Reebless foliation and let $M = M_{taut} \cup N_{Tor}$ be the decomposition of $M$ into the piece on which $\mathcal{F}$ is taut and its complement consisting of stacks of torus leaves. Consider a stack $N_i \cong T^2 \times [0,c_i]$ and as above let $ \widehat{N}_i \cong T^2 \times [-\epsilon,c_i+\epsilon]$ be a small neighbourhood of $N_i$ so that $\mathcal{F}$ is transverse to $\partial \widehat{N}_i$. After an initial isotopy we may assume that the induced foliations on the boundary of each $\widehat{N}_i$ is linear. Let $\omega$ be a dominating closed $2$-form on $M_{taut}$ and let $\alpha^{\pm}_i$ be closed $1$-forms that are positive on the induced foliations on the positive resp.\ negative end of $\widehat{N}_i$. Without loss of generality we may assume that $\omega = \alpha^{\pm}_i \wedge dt$ near each boundary component of $\widehat{N}_i$. We let $\mathcal{U}_0$ be a $C^0$-neighbourhood of $T\mathcal{F}$ such that any $\xi \in \mathcal{U}_0$ is still dominated by $\omega$ and such that $\xi$ is transverse to the interval fibers on each neighbourhood $\widehat{N}_i$. We also choose $\mathcal{U}_0$ small enough so that the slopes of the characteristic foliations near the boundary of any stable stack of leaves remain distinct. 

We let $\xi$ be any contact structure in $\mathcal{U}_0$ and we first consider those (necessarily unstable) stacks $N_{i_k}$ on which the contact structure $\xi$ is isotopic to one whose slopes remain constant. Then according to Remark \ref{Giroux_rem} we can assume that the contact structure is dominated by $\alpha^{+}_i \wedge dt = \alpha^{-}_i \wedge dt = \alpha_i \wedge dt $ on $\widehat{N}_{i_k}$. By filling in a product foliation on $\widehat{N}_{i_k} $ we obtain a foliation $\mathcal{F}'$ on $$M'_{taut} = M_{taut} \cup\left( \bigcup_{k=1}^p N_{i_k}\right)$$ which is transverse to $\partial M'_{taut}$ and has a dominating closed $2$-form $\omega'$. By construction $\mathcal{F}'$ is taut. Furthermore, $\xi$ is isotopic to a contact structure that is dominated by $\omega'$ on $M'_{taut}$ and thus $\xi|_{M'_{taut}}$ is universally tight by Theorem \ref{taut _fill_bound}.

Since $\xi$ is transverse to the interval fibers on each thickened stack $\widehat{N}_{i}$, the restriction $\xi|_{\widehat{N}_{i}}$ is universally tight by Lemma \ref{hor_univ_tight}. According to Remark \ref{Giroux_rem} we may assume that the characteristic foliations on $\partial \widehat{N}_{i}$ are linear for the remaining stacks.  Since $\mathcal{F}$ is Reebless, all torus leaves are incompressible. Thus $\xi$ is obtained by gluing universally tight contact structures along incompressible linearly foliated tori and thus it is universally tight on all of $M$ by Theorem \ref{Colin}.
\end{proof}
\begin{rem}
Theorem \ref{Reebless} implies that any foliation that can be $C^0$-approximated by overtwisted contact structures must contain a Reeb component. It would be interesting to find a vanishing cycle, and hence a Reeb component, as some sort of limit of a sequence of overtwisted discs.
\end{rem}
\section{Notions of tightness for confoliations}\label{notions_of_tightness}
Eliashberg and Thurston \cite{ETh} introduced the notion of a tight confoliation as a generalisation of both the notions of tightness for contact structures and Reeblessness for foliations.
\begin{defn}[Tightness for confoliations \cite{ETh}]
A confoliation $\xi$ on a $3$-manifold $M$ is called \textbf{tight} if for every embedded disc $D \subset M$ such that $\partial D$ is tangent to $\xi$ and $D$ is transverse to $\xi$ along $\partial D$ there exists an embedded disc $D'$ so that
\begin{itemize}
\item $\partial D' = \partial D$
\item $D'$ is tangent to $\xi$ and $e(\xi)[D\cup D'] = 0$.
\end{itemize}
\end{defn}
\noindent A confoliation that is not tight can always be approximated by overtwisted contact structures (cf.\ \cite{Vog}, p.\ 119).
Eliashberg and Thurston conjectured that tightness should imply that the Thurston-Bennequin inequalities hold. However, Vogel \cite{Vog} showed that this is not the case. He in particular constructed tight confoliations on $T^3$ that do not satisfy the Thurston-Bennequin inequalities so that all perturbations are necessarily overtwisted. This led him to define a more restrictive notion of tightness that he calls \emph{s-tightness}.
\begin{defn}[$s$-tight confoliations \cite{Vog}]
A confoliation $\xi$ on a $3$-manifold $M$ is called \textbf{s-tight} if the characteristic foliation on any generically embedded closed surface contains no \textbf{overtwisted stars}.
\end{defn}
We refer to \cite{Vog} for a precise definition of an overtwisted star, but the most important aspect of the definition for us is that as in the case of tightness, confoliations that are not $s$-tight can be approximated by overtwisted contact structures (\cite{Vog}, Theorem 6.9). Both tightness and $s$-tightness have universal analogues. Namely a confoliation is \emph{universally tight} resp.\ \emph{$s$-tight} if all its finite covers are tight resp.\ $s$-tight. Note that this definition agrees with the ordinary definition of universal tightness for contact structures since in view of Geometrisation all $3$-manifold groups are residually finite. One could of course define universal tightness for confoliations as tightness of the universal cover. However for $s$-tightness, where the definition involves a compact surface, it is not clear that the pullback of a non $s$-tight confoliation to the universal cover is $s$-tight or not.

Since Reeblessness for foliations ensures universal tightness for all sufficiently small contact perturbations we make the following definition.
\begin{defn}
A confoliation $\xi$ is called \textbf{perturbation tight}, or simply \textbf{p-tight}, if it has a $C^0$-neighbourhood $\mathcal{U}_0$ such that all (positive) contact structures $\xi' \in \mathcal{U}_0$ are tight.
\end{defn}
\noindent In view of Colin's $C^0$-stability result \cite{Col1}, $p$-tightness agrees with ordinary tightness for contact structures. Thus $p$-tightness generalises both Reeblessness for foliations and tightness for contact structure. Furthermore, in the case of foliations $p$-tightness is in fact equivalent to Reeblessness, since any foliation with a Reeb component has an overtwisted contact perturbation (cf.\ \cite{ETh}). Thus we have the following sequence of inclusions for confoliations
$$\{p\text{-tight}\} \subseteq \{s\text{-tight}\} \subsetneq \{\text{tight}\}.$$
Note that all notions of tightness above agree with ordinary tightness for contact structures and Reeblessness for foliations and the last inclusion is strict. However for the universal versions of the above properties the nature of each of these inclusions is unknown.
\begin{qu}
Do the notions of universal $p$-tightness, $s$-tightness, tightness agree for confoliations?
\end{qu}
There are several qualitative aspects of these definitions that are quite different. In particular, $p$-tightness is by definition $C^0$-stable, whereas tightness in the sense of Eliashberg and Thurston is not in view of Vogel's examples. In the case of $s$-tightness $C^0$-stability seems to be an important open questions, even for transitive confoliations. On the other hand no notion of tightness for confoliations is closed as one sees by turbulising a taut foliation along a transversal, which gives a $C^{\infty}$-deformation of foliations that ends at a foliation with Reeb components, which is not tight in any sense.

\end{document}